\documentclass[12pt,psamsfonts]{amsart}

\usepackage{amssymb,amsfonts,amsmath}
\usepackage{enumerate}
\usepackage{mathrsfs}
\usepackage{fullpage}
\usepackage{xspace}
\usepackage[margin=1.0in]{geometry}
\usepackage{tcolorbox}
\usepackage{tikz-cd}
\usepackage{color}
\usepackage{aliascnt}
\usepackage[foot]{amsaddr}
\usepackage{hyperref}
\usepackage{graphicx}
\usepackage[algoruled,linesnumbered]{algorithm2e}

\usepackage{enumitem}
\setlist[enumerate]{label=$(\mathrm{\arabic*})$, leftmargin=*}
\setlist[itemize]{leftmargin=*}

\newtheorem{thm}{Theorem}[section]

\newaliascnt{theo}{thm}

\aliascntresetthe{theo}

\newaliascnt{cor}{thm}
\newtheorem{cor}[cor]{Corollary}
\aliascntresetthe{cor}

\newaliascnt{prop}{thm}
\newtheorem{prop}[prop]{Proposition}
\aliascntresetthe{prop}

\newaliascnt{lem}{thm}
\newtheorem{lem}[lem]{Lemma}
\aliascntresetthe{lem}

\newaliascnt{conj}{thm}
\newtheorem{conj}[conj]{Conjecture}
\aliascntresetthe{conj}

\newaliascnt{que}{thm}

\aliascntresetthe{que}

\newaliascnt{ass}{thm}

\aliascntresetthe{ass}

\newaliascnt{defnot}{thm}

\aliascntresetthe{defnot}

\newaliascnt{princ}{thm}

\aliascntresetthe{princ}

\theoremstyle{remark}
\newaliascnt{rem}{thm}
\newtheorem{rem}[rem]{Remark}
\aliascntresetthe{rem}

\theoremstyle{definition}

\newaliascnt{defn}{thm}
\newtheorem{defn}[defn]{Definition}
\aliascntresetthe{defn}

\newaliascnt{exmp}{thm}

\aliascntresetthe{exmp}

\newaliascnt{notn}{thm}
\newtheorem{notn}[notn]{Notation}
\aliascntresetthe{notn}

\newtheorem{conv}[thm]{Convention}

\newcommand{\Z}{\mathbb{Z}\xspace}

\newcommand{\Q}{\mathbb{Q}\xspace}
\newcommand{\G}{\mathbb{G}\xspace}
\DeclareMathOperator{\Spec}{Spec}
\DeclareMathOperator{\res}{res}

\DeclareMathOperator{\ord}{ord}

\DeclareMathOperator{\alb}{alb}
\DeclareMathOperator{\img}{Im}

\DeclareMathOperator{\Pic}{Pic}

\DeclareMathOperator{\Fil}{Fil}

\DeclareMathOperator{\Alb}{Alb}
\DeclareMathOperator{\Symb}{Symb}

\DeclareMathOperator{\CH}{CH}
\DeclareMathOperator{\rk}{rank}

\makeatletter
\let\c@equation\c@thm
\makeatother
\numberwithin{equation}{section}

\newcommand{\red}{\color{red}}

\newcommand{\black}{\color{black}}

\newcommand{\new}{_{\mathrm{comp}}}

\title{Torsion phenomena for zero-cycles on a product of curves over a number field}

\author[*]{Evangelia Gazaki*} \address[*]{\normalfont Department of Mathematics, University of Virginia, 221 Kerchof Hall, 141 Cabell Dr., Charlottesville, VA, 22904, USA. Email: \texttt{eg4va@virginia.edu}}
\author[**]{Jonathan Love**} \address[**]{\normalfont Department of Mathematics and Statistics, Burnside Hall, McGill University, 805 Sherbrooke Street West, Montreal, Quebec H3A 0B9. Email: \texttt{jon.love@mcgill.ca}}
\begin{document}

	\begin{abstract}
		For a smooth projective variety $X$ over an algebraic number field $k$ a conjecture of Bloch and Beilinson predicts that the kernel of the Albanese map of $X$ is a torsion group. In this article we consider a product $X=C_1\times\cdots\times C_d$ of smooth projective curves and show that if the conjecture is true for any subproduct of two curves, then it is true for $X$. For a product $X=C_1\times C_2$ of two curves over $\Q$ with positive genus we construct many nontrivial examples that satisfy the weaker property that the image of the natural map $J_1(\Q)\otimes J_2(\Q)\xrightarrow{\varepsilon}\CH_0(C_1\times C_2)$ is finite, where $J_i$ is the Jacobian variety of $C_i$. Our constructions include many new examples of non-isogenous pairs of elliptic curves $E_1, E_2$ with positive rank, including the first known examples of rank greater than $1$. Combining these constructions with our previous result, we obtain infinitely many nontrivial products  $X=C_1\times\cdots\times C_d$ for which the analogous map $\varepsilon$ has finite image.
	\end{abstract}

	\maketitle

	\bigskip
\textbf{Keywords:} Zero-cycles, elliptic curves, Jacobians of curves, Somekawa $K$-groups.

	\section{Introduction} 
	Let $X$ be a smooth projective variety over an algebraic number field $k$. Consider the Chow group $\CH_0(X)$ of zero-cycles modulo rational equivalence on $X$ and denote by $F^1(X)$ the subgroup of cycles of degree zero. Let $\Alb_X$ denote the Albanese variety of $X$, and $F^2(X)$ the kernel of the Albanese map
	\[\alb_X:F^1(X)\to \Alb_X(k).\]
 Set $X_{\overline{\Q}}:=X\otimes_k\overline{\Q}$. The following is a famous conjecture of Beilinson (\cite{Beilinson1984}). 
\begin{conj} The Albanese map $F^1(X_{\overline{\Q}})\rightarrow\Alb_{X_{\overline{\Q}}}(\overline{\Q})$ is injective. That is, $F^2(X_{\overline{\Q}})=0$. 
\end{conj} 
 When $C$ is a curve, the Albanese map coincides with the isomorphism between $\Pic^0(C)$ and the $k$-rational points of the Jacobian $J_C$ of $C$, and hence $F^2(C)=0$ in this case. In higher dimensions however there is hardly any evidence for this conjecture. 

Next suppose that $k$ is an arbitrary algebraic number field. Using standard push-forward and pull-back arguments for Chow groups, the above conjecture implies the following. 
\black 	
	\begin{conj}\label{bbconj}
		The group $F^2(X)$ is torsion.
	\end{conj}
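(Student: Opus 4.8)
The plan is to reduce the assertion that $F^2(X)$ is torsion to a finiteness statement in $\ell$-adic cohomology into which one can feed the arithmetic of the number field $k$. The key structural input is the relationship, due to Bloch and to Merkurjev--Suslin, between the torsion of $\CH_0$ and étale cohomology: the Bloch map embeds the $n$-torsion $\CH_0(X)[n]$ into $H^{2d-1}_{\mathrm{et}}(X,\mathbb{Z}/n(d))$, with $d=\dim X$, and passing to the limit relates the torsion of $F^2(X)$ to the $\ell$-adic cohomology of $X$. Thus the problem splits into two genuinely different parts: showing that $F^2(X)$ modulo its maximal divisible subgroup $D=\bigcap_{n\ge 1} nF^2(X)$ is finite, and showing that $D=0$. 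The first I would attack by cohomological finiteness; the second is the heart of the matter.

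For the finiteness of the non-divisible quotient I would first reduce from an arbitrary $X$ to surfaces. Using the Bloch--Srinivas decomposition of the diagonal together with a Bertini/Lefschetz argument, every class in $F^2(X)$ can be represented by a cycle supported on a surface $S\hookrightarrow X$ defined over a finite extension of $k$, so it suffices to control $F^2$ for surfaces. For a surface, $F^2(S)$ is precisely the Chow group of the transcendental motive $t_2(S)$, and its torsion is governed by $H^3_{\mathrm{et}}(S,\mathbb{Z}_\ell(2))$. Over a number field one then hopes to bound this torsion by combining the finiteness of $H^1(\Gal(\overline k/k),M)$ for the finitely generated Galois modules $M$ that arise (a consequence of Hermite--Minkowski and finiteness of class groups) with the Mordell--Weil theorem applied to the abelian varieties controlling the relevant cohomology via a Hochschild--Serre descent from $X_{\overline k}$ to $X$. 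This portion I expect to be the routine one.

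The decisive, and I expect truly hard, step is showing $D=0$, i.e.\ that $F^2(X)$ contains no nonzero divisible element; this is where \autoref{bbconj} acquires its depth. Over $\mathbb{C}$ the theorem of Mumford shows that whenever $p_g(S)>0$ the group $F^2(S)$ is enormous, infinite-dimensional in a precise sense, so no argument using only the geometry or Hodge theory of $X$ can force $D=0$; the vanishing must come entirely from the arithmetic of $k$. A natural mechanism to attempt is that a nonzero divisible element would produce, through the Bloch map and Kummer theory, an infinite tower of cohomology classes unramified outside a fixed finite set of primes, contradicting the finiteness of such classes over a number field; making this precise is exactly the content of the Beilinson--Bloch finiteness expectation for the motivic cohomology $H^1_{\mathcal{M}}(X,\mathbb{Q}(2))$, for which no unconditional handle on the divisibility of zero-cycles is presently known. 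I therefore anticipate that the genuine obstacle, and the reason the statement is a conjecture rather than a theorem, is the control of $D$, for which a fundamentally new arithmetic input, rather than a refinement of the geometric reductions above, will be required.
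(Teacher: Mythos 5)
The statement you were asked to prove is \autoref{bbconj} itself --- the Bloch--Beilinson conjecture --- and the paper contains no proof of it, nor claims one: it is stated as an open conjecture, with the remark that ``in higher dimensions there is hardly any evidence.'' What the paper actually proves is strictly weaker: a reduction theorem (\autoref{main}, via the product formula for Somekawa $K$-groups in \autoref{mainprop}) showing that for $X=C_1\times\cdots\times C_d$ the conjecture follows from the case of two-fold subproducts, together with unconditional finiteness of the componentwise subgroup $F^2(X)\new$ (\autoref{bbcompconj}) in many explicit examples, obtained from Weil reciprocity on (hyper)elliptic curves mapping to $E_1\times E_2$. Your proposal, to its credit, correctly recognizes the statement as open and explicitly declines to claim a proof, so on the basic point --- that this cannot currently be proved --- you and the paper agree.

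However, your sketch misjudges where the difficulty lies, in two concrete places. First, the reduction to surfaces is not the formality you suggest: if $z\in F^2(X)$ is represented on a complete intersection surface $S\subset X$ by a cycle $z_S\in F^1(S)$, then $\alb_S(z_S)$ lands only in the kernel of $\Alb_S(k)\to\Alb_X(k)$, which is (up to finite index) the Mordell--Weil group of a positive-dimensional abelian variety and hence generally of positive rank; torsionness of $F^2(S)$ therefore does not yield torsionness of $z$, and the Bloch--Srinivas decomposition of the diagonal requires representability hypotheses unavailable here. Second, the half you call ``routine'' --- finiteness of $F^2$ modulo its maximal divisible subgroup via Hermite--Minkowski, Mordell--Weil, and Hochschild--Serre --- is itself a hard open problem: the strongest known results, cited in the paper (Langer--Saito, Langer--Raskind), give finiteness of the $p$-primary torsion of $\CH_0(E\times E)$ only for $p$ prime to the conductor and only for special self-products. (Also, the injectivity of the Bloch map on torsion that you invoke is a codimension-$2$ phenomenon of Merkurjev--Suslin and is not known for zero-cycles on $X$ of dimension $d\geq 3$.) So your splitting into ``finite non-divisible part'' plus ``vanishing of the divisible part $D$'' places all the depth in $D$, whereas in fact both halves are open; the only currently tractable content is of the kind the paper pursues, namely torsionness of the explicit symbols $\{a_1,\ldots,a_r\}_{L/k}$ generating $F^2(X)$ under the Raskind--Spiess isomorphism.
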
 
\black 	  The goal of this article is to obtain some weak evidence for \autoref{bbconj} \black for a product $X=C_1\times\cdots\times C_d$ of smooth projective curves over $k$. 

	\subsection*{A product formula for Somekawa $K$-groups and applications}
	
	Let $J_i$ be the Jacobian variety of $C_i$. Under the assumption $X(k)\neq\emptyset$,  Raskind and Spiess (\cite[(1.4)]{Raskind/Spiess2000}) established  an isomorphism
\[\CH_0(X)\simeq\Z\oplus\bigoplus_{\nu=1}^d\bigoplus_{1\leq i_1<\cdots<i_\nu\leq d} K(k;J_{i_1},\ldots,J_{i_\nu}),\]	which in turn yields an isomorphism (see \cite[p.~4]{Yamazaki2005})
	 \[F^2(X)\simeq\bigoplus_{\nu=2}^d\bigoplus_{1\leq i_1<\cdots<i_\nu\leq d} K(k;J_{i_1},\ldots,J_{i_\nu}),\]  where $K(k;J_{i_1},\ldots,J_{i_\nu})$ is the Somekawa $K$-group attached to $J_{i_1},\ldots, J_{i_\nu}$ (cf.~\cite{Somekawa1990}). 
	For semi-abelian varieties $G_1,\ldots, G_r$ over $k$, the $K$-group $K(G_1,\ldots, G_r)$ is a generalization of the Milnor $K$-group $K_r^M(k)$ of the field $k$.  In fact when $G_1=\cdots=G_r=\G_m$ there is an isomorphism $K(\G_m,\cdots,\G_m)\simeq K_r^M(k)$. For the latter, there is a well-defined map 
	 \black
	\[K_r^M(k)\otimes K_s^M(k)\to K_{r+s}^M(k)\] for $r,s\geq 0$, given by concatenation of symbols. In \autoref{productformulasection} we prove the following  analog for Somekawa $K$-groups attached to abelian varieties. 
\begin{prop}\label{mainpropintro} (cf.~\autoref{mainprop}) Let $A_1,\ldots, A_r$ be abelian varieties over a perfect field $k$ with $r\geq 3$. Let $L/k$ be a finite extension. There is a well-defined homomorphism 
			\[ \Phi_L:K(k;A_1,A_2)\otimes A_3(L)\otimes\cdots\otimes A_r(L)\to K(k;A_1,\ldots, A_r).\]
		\end{prop}
	 \noindent	In the simplest case of an element $\{a_1,a_2\}_{k/k}\otimes a_3\otimes\cdots\otimes a_r$ with $a_i\in A_i(k)$ for all $i$, the map $\Phi_L$ is simply given by concatenation, 
		\[\Phi_L(\{a_1,a_2\}_{k/k}\otimes a_3\otimes\cdots\otimes a_r)=\{a_1,\ldots,a_r\}_{k/k}.\]  The general definition is more involved; we omit it here for the purposes of the introduction.  We will refer to the homomorphism $\Phi$ as a \textit{product formula for Somekawa $K$-groups attached to abelian varieties}. \black While the proof of \autoref{mainpropintro} is relatively straightforward, it yields the following important corollary, allowing us to deduce \autoref{bbconj} for a product of arbitrarily many curves, assuming it is known for each subproduct of two curves.\black
		\begin{cor} \black  (cf.~ \autoref{main}) \label{mainintro} Let $d\geq 2$ and $X=C_1\times\cdots\times C_d$ be a product of smooth projective curves over a number field $k$ with $C_i(k)\neq\emptyset$ for $i\in\{1,\ldots,d\}$. Suppose that for each $1\leq i<j\leq d$ the group $F^2(C_i\times C_j)$ is torsion. Then $F^2(X)$ is torsion.
	\end{cor} 
			
	
%
%
\black 
\subsection*{The componentwise subgroup}	
 
	Proving \autoref{bbconj} 
	for even a single product $X=C_1\times C_2$ of two non-rational  curves $C_1,C_2$ seems to be far out of reach. 
	 To date, the best results towards understanding the structure of $F^2(X)$ are due to  
	Langer and Saito (\cite{Langer/Saito1996}) and Langer and Raskind (\cite{Langer/Raskind1996}), who proved that for the self-product $X=E\times E$ of an elliptic curve $E$ over $\Q$ without and with potential CM respectively, the $p$-primary torsion subgroup of $\CH_0(X)$ is finite for every prime $p$ coprime to the conductor $N$ of $E$. 
	
	 In this article we give evidence for a weaker question. We consider the homomorphism 
	\[\varepsilon_X:\CH_0(C_1)\otimes\cdots\otimes \CH_0(C_d)\to\CH_0(X)\] induced by the intersection product. We define \textit{the componentwise subgroup} $F^2(X)\new$ of $F^2(X)$ to be $F^2(X)\new:=F^2(X)\cap\img(\varepsilon_X)$ (see \autoref{compwise}). We propose the following conjecture, which is automatically implied by \autoref{bbconj}.  
	\black 
	\begin{conj}\label{bbcompconj}
		The subgroup $F^2(X)\new$ is finite.
	\end{conj}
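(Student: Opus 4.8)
The plan is to deduce finiteness from two facts: that $F^2(X)\new$ is a finitely generated abelian group, and that, granting \autoref{bbconj}, it is torsion. Since a finitely generated torsion abelian group is finite, the combination of these two facts proves the claim. Thus the conjecture is really the statement that the torsion predicted by \autoref{bbconj} becomes genuine finiteness once one restricts to the componentwise part, and the entire content of the implication is the upgrade from ``torsion'' to ``finite''.

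For finite generation, recall (as noted above) that each $\CH_0(C_i)$ is finitely generated, a consequence of the Mordell--Weil theorem applied to the Jacobian of $C_i$. The exterior product of zero-cycles is compatible with rational equivalence and is multilinear, so it induces a homomorphism
\[
\CH_0(C_1)\otimes_{\Z}\cdots\otimes_{\Z}\CH_0(C_d)\longrightarrow \CH_0(X),\qquad z_1\otimes\cdots\otimes z_d\longmapsto z_1\times\cdots\times z_d.
\]
A tensor product of finitely generated abelian groups is again finitely generated, so the image of this map is a finitely generated subgroup of $\CH_0(X)$. By definition $F^2(X)\new$ is the subgroup generated by those products $z_1\times\cdots\times z_d$ that lie in $F^2(X)$, and is therefore contained in this image; as a subgroup of a finitely generated abelian group it is itself finitely generated.

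Granting \autoref{bbconj}, the group $F^2(X)$ is torsion, hence so is its subgroup $F^2(X)\new$. Being both finitely generated and torsion, $F^2(X)\new$ is finite, which completes the argument.

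I expect no serious obstacle here: once \autoref{bbconj} is assumed, the reasoning is purely formal. The only points requiring a little care are the two inputs to the finite-generation step, namely the Mordell--Weil theorem and the compatibility of the exterior product with rational equivalence; both are standard. All of the genuine difficulty is concentrated in \autoref{bbconj} itself, which supplies the torsion property, while finite generation of the curves' Chow groups is exactly what sharpens that torsion conclusion into the finiteness asserted here.
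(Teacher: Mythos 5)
Your argument is correct and is exactly the paper's justification: the paper derives \autoref{bbcompconj} from \autoref{bbconj} in one sentence ("Since $\CH_0(C_i)$ is finitely generated for each $i$, \autoref{bbconj} implies the following"), which is precisely your finitely-generated-plus-torsion-implies-finite reasoning spelled out. Note only that the statement itself is a conjecture, so what you have proved is the intended implication from \autoref{bbconj}, not an unconditional result.
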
 
	 
To the authors' knowledge this article is the first instance where \autoref{bbcompconj} is stated at this level of generality. In the special case when $X=C_1\times C_2$ is a product of two curves with $X(k)\neq\emptyset$, the group $F^2(X)\new$ is precisely the image of the natural map 
	\[\varepsilon_X: J_1(k)\otimes J_2(k)\to F^2(X).\]  When both $C_i$ are elliptic curves, \black  this conjecture has been considered in \cite{Love2020} and \cite{Prasanna/Srinivas2022}, both of which provide new examples of such surfaces $X$ for which $F^2(X)\new$ is finite. Note that if  the base change \black  $X_L$ satisfies \autoref{bbcompconj} for all finite extensions $L/k$, then $X$ satisfies \autoref{bbconj} (see \autoref{compconjtofullconj}), so \autoref{bbcompconj} can be seen as a first step towards \autoref{bbconj} for products of curves. \black
	
	
	\subsection*{Detecting torsion classes for a product of two curves}	
	In \autoref{finiteoverk} we construct many nontrivial examples  for which  
	\autoref{bbcompconj} can be verified. For the sake of introduction, we choose to state our results in their simplest form. 	
	 Assume  the curves $C_1$ and $C_2$ each have a distinguished rational point $O_i\in C_i(k)$. Then every $k$-rational point $(P,Q)$ on $X=C_1\times C_2$ gives rise to a zero-cycle in $F^2(X)\new$, namely
	\[z_{P,Q}:=[P,Q]-[P,O_2]-[O_1,Q]-[O_1,O_2]=\pi_1^\star([P]-[O_1])\cdot\pi_2^\star([Q]-[O_2]),\] 
	 where $\pi_i:X\to C_i$ is the natural projection.  When $X$ is a product of two elliptic curves, such zero-cycles in fact generate $F^2(X)\new$, but this is generally not true for higher genus curves. \black  The following key proposition can be used to show that certain cycles of this form are torsion. 
	
	\begin{prop} (cf.~ \autoref{torsionfromhyper})  \label{torsionfromhyperintro}
		Let $H$ be an elliptic  (resp.~ hyperelliptic) curve, let $P\in H(k)$, and let $\phi_i:H\to C_i$ be a regular map for each $i=1,2$. Suppose there exists $W\in H(\overline{k})$, fixed by negation (resp.~ the hyperelliptic involution), such that $[\phi_i(W)]-[O_i]\in J_i(\overline{k})$ is torsion for $i=1,2$. Then the zero-cycle $z_{\phi_1(P),\phi_2(P)}$
		is torsion.
	\end{prop}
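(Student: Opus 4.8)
The plan is to combine two relations: an \emph{algebraic} identity in the Jacobians $J_i$ coming from the involution, and a \emph{geometric} rational equivalence in $\CH_0(X)$ obtained by transporting the hyperelliptic relation on $H$ into $X$. Throughout write $\iota$ for the involution (negation, resp.\ the hyperelliptic involution) and $\gamma=(\phi_1,\phi_2)\colon H\to X$, and for a point $Q\in H$ set $f_i(Q):=[\phi_i(Q)]-[O_i]\in J_i$, so that $z_{\phi_1(Q),\phi_2(Q)}=\varepsilon_X(f_1(Q)\otimes f_2(Q))$. Since the conclusion concerns the $k$-rational cycle $z_{\phi_1(P),\phi_2(P)}$, I will first prove it is torsion after base change to the finite extension $k':=k(W)$, over which $W$ and the $\phi_i(W)$ are rational; let $N$ be a common multiple of the orders of the torsion classes $f_i(W)\in J_i(k')$.

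First I would record the algebraic relation. Because $W$ is fixed by $\iota$, the standard linear equivalence on $H$ reads $[Q]+[\iota Q]\sim [W]+[\iota W]=2[W]$ for every $Q$ (the fibre class of the degree-two map $H\to\mathbb{P}^1$ in the hyperelliptic case, and $[Q]+[-Q]\sim 2[O_E]\sim 2[W]$ for a $2$-torsion point $W$ in the elliptic case). Pushing forward along $\phi_i$ gives $[\phi_i(Q)]+[\phi_i(\iota Q)]\sim 2[\phi_i(W)]$ on $C_i$, i.e.\ $f_i(Q)+f_i(\iota Q)=2f_i(W)$ in $J_i$. Substituting $f_i(\iota P)=2f_i(W)-f_i(P)$ into $z_{\phi_1(\iota P),\phi_2(\iota P)}=\varepsilon_X(f_1(\iota P)\otimes f_2(\iota P))$ and using bilinearity of $\varepsilon_X$ expands the difference $z_{\phi_1(\iota P),\phi_2(\iota P)}-z_{\phi_1(P),\phi_2(P)}$ into a sum of terms each carrying a factor $f_i(W)$; since $N f_i(W)=0$, each such term is killed by $N$, so
\[N\,\bigl(z_{\phi_1(\iota P),\phi_2(\iota P)}-z_{\phi_1(P),\phi_2(P)}\bigr)=0.\]

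Next comes the geometric relation, which I expect to be the main obstacle. The divisor $[P]+[\iota P]-2[W]$ is principal on $H_{k'}$, so its image under the proper pushforward $\gamma_\star$ vanishes in $\CH_0(X_{k'})$, giving $[\gamma(P)]+[\gamma(\iota P)]=2[\gamma(W)]$. Rewriting each closed point $[\gamma(Q)]=[(\phi_1(Q),\phi_2(Q))]$ through the intersection-product definition $z_{\phi_1(Q),\phi_2(Q)}=\pi_1^\star([\phi_1(Q)]-[O_1])\cdot\pi_2^\star([\phi_2(Q)]-[O_2])$, the one-variable correction terms (those supported on $C_1\times\{O_2\}$ and $\{O_1\}\times C_2$) cancel, again by the pushforward of $[\phi_i(Q)]+[\phi_i(\iota Q)]\sim 2[\phi_i(W)]$; what survives is
\[z_{\phi_1(P),\phi_2(P)}+z_{\phi_1(\iota P),\phi_2(\iota P)}=2\,z_{\phi_1(W),\phi_2(W)}.\]
The right-hand side equals $2\,\varepsilon_X(f_1(W)\otimes f_2(W))$ and is therefore killed by $N$.

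Finally I would combine the two displays. Multiplying the second by $N$ and using $N z_{\phi_1(W),\phi_2(W)}=0$ gives $N z_{\phi_1(P),\phi_2(P)}+N z_{\phi_1(\iota P),\phi_2(\iota P)}=0$, and substituting the first display yields $2N\,z_{\phi_1(P),\phi_2(P)}=0$ in $\CH_0(X_{k'})$. To descend to $k$, I apply the transfer: for the finite flat base change $\pi\colon X_{k'}\to X_k$ the corestriction (proper pushforward) $\pi_\star$ satisfies $\pi_\star\circ\pi^\star=[k':k]$ on $\CH_0$, so applying $\pi_\star$ to $2N\,\pi^\star z_{\phi_1(P),\phi_2(P)}=0$ gives $2N[k':k]\,z_{\phi_1(P),\phi_2(P)}=0$ in $\CH_0(X_k)$; hence the cycle is torsion. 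The conceptual difficulty is exactly the geometric step: as the paper notes, producing \emph{any} nontrivial relation in $F^2(X)$ is hard, and the whole argument hinges on transporting the single rational equivalence $[P]+[\iota P]\sim 2[W]$ from the curve $H$ into $X$ along $\gamma=(\phi_1,\phi_2)$. A secondary point requiring care is that the torsion is first visible only over $k(W)$ and must be returned to $k$ via the transfer; the descent of the linear equivalences along the way is legitimate because the relevant curves acquire rational points over $k'$, so their degree-zero $k'$-rational divisor classes inject into those over $\overline{k}$.
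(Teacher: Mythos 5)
Your proof is correct, and at its core it is the paper's argument --- push the single rational equivalence $[P]+[\iota P]\sim 2[W]$ from $H$ into the product and use the involution to turn the resulting relation into $2\cdot(\text{cycle})+(\text{torsion})=0$ --- but rendered entirely in the language of zero-cycles rather than Somekawa symbols. The paper proves the equivalent statement that the symbol $\{[\phi_1(P)]-[O_1],[\phi_2(P)]-[O_2]\}_{k/k}$ is torsion in $K(k;J_1,J_2)$: over $L=k(W,P)$ it applies Weil reciprocity \eqref{WR} to the function with divisor $[P]+[\overline{P}]-2[W]$ and the maps $g_i(R)=[\phi_i(R)]-[\phi_i(W)]$, uses $g_i(\overline{P})=-g_i(P)$ and bilinearity to get $2\{g_1(P),g_2(P)\}_{L/L}=0$, shifts base points to $O_i$ using the torsion hypothesis, and descends to $k$ via the norm map and the projection formula \eqref{PF}. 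Under the Raskind--Spiess isomorphism \eqref{F2compiso} these steps match yours one for one: the Weil reciprocity relation is exactly your proper pushforward of the principal divisor along $(\phi_1,\phi_2):H\to X$ together with the cancellation of the correction terms supported on $C_1\times\{O_2\}$ and $\{O_1\}\times C_2$, and the norm/projection-formula descent is your transfer identity $\pi_\star\circ\pi^\star=[k':k]$. The one organizational difference is the choice of base point: the paper centers the maps at $\phi_i(W)$, so the $W$-term of the reciprocity relation is literally $\{0,0\}=0$ and the torsion hypothesis is invoked only once (when shifting back to $O_i$), whereas you keep $O_i$ throughout and invoke $Nf_i(W)=0$ twice (for the cross terms in your first display and for $2z_W$ in your second); both routes end with the same annihilator $2N[k':k]$. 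What your version buys is self-containedness --- it never mentions $K(k;J_1,J_2)$ and proves the cycle statement directly in $\CH_0(X_{k'})$ --- while the paper's symbol-level formulation is the one it reuses verbatim in \autoref{torsionfromhyper} and its corollaries; the two are interchangeable here since \autoref{finiteindex} is itself phrased in terms of $\varepsilon_X$.
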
 
	
	
	This proposition is the main new tool we use to construct torsion classes in $F^2(X)\new$. While the result is a relatively direct consequence of Weil reciprocity of the Somekawa $K$-group $K(k;J_1,J_2)$,  its power lies in the fact that its conditions are quite easy to satisfy, allowing us to apply it to large classes of examples. \black
	
 We first observe two direct consequences of \autoref{torsionfromhyperintro} which are likely known to the experts, but we could not find them in the literature at this level of generality. First, any product $X=E_1\times E_2$ of isogenous rank $1$ elliptic curves over a number field satisfies \autoref{bbcompconj}. Second, let $E$ be an elliptic curve over $\Q$ with potential CM by the ring of integers of a quadratic imaginary field $K$. Suppose that $E(\Q)$ has rank $1$ and $E(K)$ has rank $2$. Then \autoref{bbcompconj} holds for the base change $E_K\times E_K$ to $K$ (cf.~ \autoref{CMcase}).
	
	The following Corollary gives a less trivial application of \autoref{torsionfromhyperintro}. 
	\begin{cor}\label{hyperrank1intro} (cf.~ \autoref{hyperselfproduct}) Let $H$ be a hyperelliptic curve over $k$ with Jacobian $J$. Suppose $\rk J(k)=1$, and that there exists $P,W\in H(k)$, with $W$ fixed by the hyperelliptic involution, and $[P]-[W]\in J(k)$ is of infinite order. Then $H\times H$ satisfies \autoref{bbcompconj}. 
	\end{cor}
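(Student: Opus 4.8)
The plan is to reduce the statement to a single application of \autoref{torsionfromhyperintro}. Since $X=H\times H$ is a product of two curves with $X(k)\neq\emptyset$, I would use the identification of $F^2(X)\new$ with the image of the natural map $\varepsilon_X\colon J(k)\otimes J(k)\to F^2(X)$ recalled above, so that it suffices to prove this image is finite. By the Mordell--Weil theorem $J(k)$ is finitely generated, and the hypothesis $\rk J(k)=1$ lets me write $J(k)=\mathbb{Z}g\oplus T$ with $T=J(k)_{\mathrm{tors}}$ finite. Tensoring, $J(k)\otimes J(k)\cong \mathbb{Z}\,(g\otimes g)\oplus(\text{finite torsion})$, the torsion part being all classes with a factor in $T$. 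Consequently $\img(\varepsilon_X)$ is a finitely generated group of rank at most $1$, and it is finite if and only if $\varepsilon_X(g\otimes g)$ is torsion in $F^2(X)$.

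Next I would realize a nonzero multiple of $g\otimes g$ as an explicit cycle $z_{P,P}$. Take the distinguished point $O_1=O_2=W$ on each factor $H$. The class $\alpha:=[P]-[W]\in F^1(H)\cong J(k)$ is of infinite order by hypothesis, so $\alpha=ng+t$ for some integer $n\neq 0$ and some $t\in T$. Expanding,
\[
\alpha\otimes\alpha=(ng+t)\otimes(ng+t)=n^2\,(g\otimes g)+n\,(g\otimes t)+n\,(t\otimes g)+t\otimes t,
\]
and each of the last three terms is killed by the order of $t$, hence torsion. Therefore $\varepsilon_X(\alpha\otimes\alpha)\equiv n^2\,\varepsilon_X(g\otimes g)$ modulo torsion. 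On the other hand, by the defining formula for $\varepsilon_X$ and the choice $O_1=O_2=W$, we have $\varepsilon_X(\alpha\otimes\alpha)=\pi_1^\star([P]-[W])\cdot\pi_2^\star([P]-[W])=z_{P,P}$.

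It then remains to show that $z_{P,P}$ is torsion, and this is exactly \autoref{torsionfromhyperintro} applied with $C_1=C_2=H$, with $\phi_1=\phi_2=\mathrm{id}_H$, and with the given points $P,W\in H(k)$. Here $W$ is fixed by the hyperelliptic involution by assumption, and the torsion condition on $W$ is automatic since $[\phi_i(W)]-[O_i]=[W]-[W]=0$ for $i=1,2$. Thus $z_{\phi_1(P),\phi_2(P)}=z_{P,P}$ is torsion. Combining with the preceding paragraph, $n^2\,\varepsilon_X(g\otimes g)$ is torsion, and since $n\neq 0$ this forces $\varepsilon_X(g\otimes g)$ to be torsion. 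Hence $\img(\varepsilon_X)=F^2(H\times H)\new$ is finite, which is \autoref{bbcompconj} for $X$.

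The argument is essentially a bookkeeping reduction once \autoref{torsionfromhyperintro} is in hand, so I do not anticipate a serious obstacle. The one point needing care is the passage from the single infinite-order class $\alpha\otimes\alpha$ to the generator $g\otimes g$: this is precisely where the rank-$1$ hypothesis enters, guaranteeing that the free part of $J(k)\otimes J(k)$ is one-dimensional and spanned by $g\otimes g$, so that torsionness of one nonzero multiple propagates to the generator and hence to the entire image.
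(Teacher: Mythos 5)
Your proof is correct and takes essentially the same route as the paper: the paper's proof of \autoref{hyperselfproduct} simply applies \autoref{torsionfromhyper} with $\phi_1=\phi_2=\mathrm{id}_H$ and $O_1=O_2=W$, together with \autoref{finiteindex}. Your explicit decomposition $J(k)\otimes J(k)\cong \mathbb{Z}(g\otimes g)\oplus(\text{finite})$ and the passage from $\alpha\otimes\alpha$ to $g\otimes g$ is exactly the bookkeeping that \autoref{finiteindex} packages (when $\rk J(k)=1$, one element of infinite order in $J(k)\otimes J(k)$ mapping to torsion suffices).
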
 
	\black 
	There are $860$ genus $2$ curves over $\Q$ in the LMFDB~\cite{lmfdb} with conductor at most $10000$ satisfying the criteria in \autoref{hyperrank1intro}, assuming that analytic rank equals algebraic rank for each of the curves considered (as predicted by the Birch and Swinnerton-Dyer conjecture). To our knowledge these are the first examples of \autoref{bbcompconj} (and the first pieces of evidence towards \autoref{bbconj}) involving products of higher genus curves of positive rank. 

	The most substantial contribution of \autoref{torsionfromhyperintro} is for products $X=E_1\times E_2$ of elliptic curves with positive rank over $\Q$ and fully rational $2$-torsion, a situation which is discussed in \autoref{newconstruction}. \black  For any such pair of elliptic curves, a theorem of Scholten (\cite[Theorem 1]{Scholten}) provides an explicit hyperelliptic curve $H$ of genus $2$ over $k$ with  explicit regular maps $H\xrightarrow{\phi_i}E_i$ for $i=1,2$.\footnote{Constructions of such a curve $H$ were known prior to Scholten, for instance in \cite{freykani1991}, but we use Scholten's construction because it provides explicit equations.}  We thus obtain the following important corollary. 
	
	\begin{cor}\label{2torsionexmps} In the above set-up suppose that the set $\{z_{\phi_1(P),\phi_2(P)}:P\in H(k)\}$ generates a finite index subgroup of $F^2(E_1\times E_2)\new$. Then $E_1\times E_2$ satisfies \autoref{bbcompconj}.
	\end{cor}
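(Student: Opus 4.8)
The plan is to use \autoref{torsionfromhyperintro} to show that each generator $z_{\phi_1(P),\phi_2(P)}$ is torsion, and then to combine this with Faltings' theorem and the finite-index hypothesis to force finiteness of the whole group. I would begin with the formal reduction. Write $G$ for the subgroup of $F^2(E_1\times E_2)\new$ generated by the set $S=\{z_{\phi_1(P),\phi_2(P)}:P\in H(k)\}$. By hypothesis $G$ has finite index in $F^2(E_1\times E_2)\new$. Since the index of $G$ and the order of $G$ multiply to the order of the ambient group, it suffices to prove that $G$ itself is finite.

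Next I would apply \autoref{torsionfromhyperintro} to all of $S$ simultaneously. Here $H$ is the genus-$2$ hyperelliptic curve produced by Scholten's construction and $J_i=E_i$. To invoke the proposition I need a single point $W\in H(\overline{k})$ fixed by the hyperelliptic involution---that is, a Weierstrass point---with $[\phi_i(W)]-[O_i]$ torsion in $E_i(\overline{k})$ for $i=1,2$. In Scholten's explicit model each $\phi_i$ carries the ramification points of $H$ into $E_i[2]$, and because $E_1,E_2$ have fully rational $2$-torsion the class $[\phi_i(W)]-[O_i]$ is then a $2$-torsion element, hence torsion. With such a $W$ fixed once and for all, \autoref{torsionfromhyperintro} shows that $z_{\phi_1(P),\phi_2(P)}$ is torsion for \emph{every} $P\in H(k)$, so every element of $S$ is torsion.

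Finally I would invoke Faltings' theorem. Since $H$ has genus $2$, the set $H(k)$ is finite, so $S$ is a finite set. Thus $G$ is a finitely generated abelian group all of whose generators are torsion, and any finitely generated torsion abelian group is finite; hence $G$ is finite. By the reduction of the first step, $F^2(E_1\times E_2)\new$ is then finite, which is precisely \autoref{bbcompconj} for $E_1\times E_2$.

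The only substantive point in this argument is the existence of the Weierstrass point $W$ with torsion images under $\phi_1$ and $\phi_2$; everything else is formal bookkeeping. The hard part is therefore verifying that Scholten's genus-$2$ curve really does send its ramification points into the $2$-torsion of each $E_i$---this is where the assumption of fully rational $2$-torsion and the explicit geometry developed in \autoref{newconstruction} enter. Granting that geometric input, the corollary follows at once from \autoref{torsionfromhyperintro} together with Faltings' finiteness.
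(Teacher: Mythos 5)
Your argument is correct and follows essentially the same route as the paper: Scholten's maps send the Weierstrass points of $H$ to $2$-torsion, so \autoref{torsionfromhyperintro} makes every $z_{\phi_1(P),\phi_2(P)}$ torsion, and a finite-index subgroup of $F^2(E_1\times E_2)\new$ consisting of torsion elements forces the whole group to be finite. The only (harmless) detour is the appeal to Faltings: it is not needed, since $F^2(E_1\times E_2)\new$ is already finitely generated as the image of $E_1(k)\otimes E_2(k)$ under $\varepsilon_X$ (Mordell--Weil plus the Raskind--Spiess identification \eqref{F2compiso}), so the subgroup generated by your torsion cycles lies in the finite torsion subgroup without knowing that $H(k)$ is finite.
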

 We present an algorithm (cf.~ \autoref{tensoralg}) that can be used to check the above criteria computationally. Applying this algorithm to a collection of curves from the LMFDB~\cite{lmfdb} allowed us to find many explicit examples.  This includes the first known examples where $F^2(X)\new$ is finite with $E_1,E_2$ \black  non-isogenous curves with rank greater than $1$. Specifically, for products $X=E_1\times E_2$ with $E_1, E_2$  non-isomorphic elliptic curves over $\Q$ with torsion subgroup $\Z/2\Z\oplus\Z/2\Z$,  we verify that $F^2(X)\new$ is finite \black for $2602$ pairs of rank $1$ curves (out of $4950$ pairs tested), $995$ pairs of rank $2$ curves (out of  $4950$), $17$ pairs of rank $3$ curves (out of $190$), \black $3311$ pairs with ranks $1$ and $2$ respectively (out of $10000$), $955$ pairs with ranks $1$ and $3$ respectively (out of $10000$), and $615$ pairs with ranks $2$ and $3$ respectively (out of $10000$). We also found several 
	self-products $E\times E$   satisfying \autoref{bbcompconj} with $E$  of rank $2$ or $3$. We refer to \autoref{data} for more details.

	We note that to our knowledge there are only two previously known constructions of non-isogenous rank $1$ pairs with product satisfying \autoref{bbcompconj}. The first one is due to Prasanna and Srinivas, who in an upcoming preprint\footnote{ The authors first learned of this result by private communication in 2018.} \cite{Prasanna/Srinivas2022} constructed two such pairs: one pair of two non-isogenous curves with conductor $37$, and the other pair with two non-isogenous curves of conductor $91$. \black The second construction is due to the second author, who constructed a $2$-parameter family $E_{s,t}$ of elliptic curves over a number field $k$ with the property that for each $s\in k$, and all $t_1,t_2\in k$ outside a finite subset depending on $s$, if $E_{s,t_1}(k)$ and $E_{s,t_2}(k)$ both have rank $1$, then $E_{s,t_1}\times E_{s,t_2}$ satisfies \autoref{bbcompconj}. In \autoref{previousconstructions} we recall some details of these constructions.

	\subsection*{Products of more curves} The proof of \autoref{mainintro} yields the following important corollary,  which allows us to produce infinitely many nontrivial products for which \autoref{bbcompconj} is true. 
	\begin{cor} Let $E_1, E_2$  be any of the rank $1$ pairs  mentioned above such that $F^2(E_1\times E_2)\new$ is finite. Let $d\geq 2$ and $X=C_1\times\cdots\times C_d$ with each $C_i$ either an elliptic curve isogenous to $E_1$ or $E_2$, or a curve with $C_i(\Q)\neq\emptyset$ and $\rk(J_{C_i}(\Q))=0$. Then $F^2(X)\new$ is finite. 
\end{cor}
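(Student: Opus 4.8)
The plan is to prove that $F^2(X)\new$ is finitely generated and torsion, whence finite. Finite generation is immediate: directly from the definition (generalizing the map $\varepsilon_X$ of the two-curve case), $F^2(X)\new$ is the image of the natural multilinear map $J_{C_1}(k)\otimes\cdots\otimes J_{C_d}(k)\to \CH_0(X)$, and each Mordell--Weil group $J_{C_i}(k)$ is finitely generated, so the source --- and hence its image --- is finitely generated as well. It therefore remains to show that $F^2(X)\new$ is torsion.

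For the torsion statement I would invoke the product formula for Somekawa $K$-groups, \autoref{mainprop}, which is the engine behind \autoref{mainintro}. Applied to the Jacobians $J_{C_1},\ldots,J_{C_d}$, it expresses $F^2(X)\new$ --- the image in $\CH_0(X)$ of the Somekawa $K$-group $K(k;J_{C_1},\ldots,J_{C_d})$ --- in terms of the pairwise componentwise groups $F^2(C_i\times C_j)\new$, so that $F^2(X)\new$ is torsion as soon as each $F^2(C_i\times C_j)\new$ with $1\le i<j\le d$ is torsion. The problem thus reduces to verifying finiteness of every two-factor componentwise group.

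This pairwise verification splits into three cases according to the types of $C_i$ and $C_j$. If either curve, say $C_i$, has Jacobian of rank $0$, then $J_{C_i}(k)$ is finite, hence $J_{C_i}(k)\otimes J_{C_j}(k)$ is finite and so is its image $F^2(C_i\times C_j)\new$. If $C_i$ and $C_j$ are both isogenous to $E_1$, or both to $E_2$, then they are isogenous to each other and both have rank $1$, so $F^2(C_i\times C_j)\new$ is finite by \autoref{isogenous}. Finally, if one curve is isogenous to $E_1$ and the other to $E_2$, then the product of the two isogenies is a finite surjective map $\psi:C_i\times C_j\to E_1\times E_2$; the composites $\psi_*\psi^*$ and $\psi^*\psi_*$ are multiplication by $\deg\psi$, and $\psi_*,\psi^*$ preserve the componentwise structure (since $\psi$ is a product map), so $\psi_*$ carries $F^2(C_i\times C_j)\new$ into $F^2(E_1\times E_2)\new$ with finite kernel and cokernel. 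As the target is finite by hypothesis, so is the source. In all three cases $F^2(C_i\times C_j)\new$ is finite, hence torsion.

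Feeding these pairwise torsion statements into \autoref{mainprop} shows that $F^2(X)\new$ is torsion, and combined with finite generation this yields finiteness. I expect the main obstacle to be the third case: the isogeny-invariance of finiteness of the componentwise group must be made precise, since $\psi$ only becomes invertible after inverting its degree, so one has to track the finite kernels and cokernels of $\psi_*$ and $\psi^*$ and confirm they do not enlarge $F^2(C_i\times C_j)\new$ beyond a finite amount. A secondary technical point is to confirm that the product formula of \autoref{mainprop} applies directly to the componentwise subgroup over $k$, rather than only to the full group $F^2(X)$ --- but this is precisely the mechanism by which \autoref{mainintro} is proved, so it should carry over without change.
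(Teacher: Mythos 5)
Your proposal is correct in substance and follows the paper's overall architecture --- reduce to finiteness of the pairwise groups $F^2(C_i\times C_j)\new$ and then feed these into the product formula of \autoref{mainprop} --- but it diverges from the paper in one of the three pairwise cases and in how the final step is packaged. The paper packages the conclusion as \autoref{corollaryS}: for each subset $\{i_1,\ldots,i_\nu\}$ the map $\Phi:\Symb_k(J_{i_1},J_{i_2})\otimes J_{i_3}(k)\otimes\cdots\otimes J_{i_\nu}(k)\to\Symb_k(J_{i_1},\ldots,J_{i_\nu})$ from \autoref{mainprop} is surjective with finite source, so each summand of \eqref{F2compiso} is finite outright; your ``finitely generated plus torsion'' split amounts to the same thing (though note $F^2(X)\new$ is the image of the full direct sum $\bigoplus_\nu\bigoplus J_{i_1}(k)\otimes\cdots\otimes J_{i_\nu}(k)$, not just of the top tensor product). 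Your cases (a) and (b) are exactly the paper's \autoref{rank0} and \autoref{isogenous}. The genuine difference is case (c), one curve isogenous to $E_1$ and the other to $E_2$: the paper (\autoref{Sisogclasses}) pushes the single torsion symbol $\{P_1,P_2\}_{k/k}$ forward along the isogeny via covariant functoriality of the Somekawa $K$-group, observes that $\{P_1,\phi(P_2)\}_{k/k}$ still has non-torsion entries, and invokes \autoref{finiteindex} --- an argument that leans on the rank~$1$ hypothesis. You instead run a two-sided transfer along $\psi=\psi_1\times\psi_2$, which if carried out correctly proves the stronger statement that finiteness of the componentwise group is an isogeny invariant in any rank. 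The one point to repair is your claim that $\psi^*\psi_*=\deg\psi$: this is false for $\CH_0$ of the surface in general (for a finite flat cover only $\psi_*\psi^*=\deg\psi$ holds, while $\psi^*\psi_*$ is a sum over translates by the kernel). The correct mechanism is that pullback of degree-zero divisor classes along an isogeny is the dual isogeny, so on the symbol groups one has $\{a,b\}\mapsto\{\psi_1(a),\psi_2(b)\}\mapsto\{\hat\psi_1\psi_1(a),\hat\psi_2\psi_2(b)\}=(\deg\psi_1)(\deg\psi_2)\{a,b\}$; hence the kernel of $\psi_*$ on $\Symb_k(J_{C_i},J_{C_j})$ is killed by $\deg\psi$ and, being a subgroup of a finitely generated group, is finite. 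With that fix your route closes, at the cost of slightly more bookkeeping than the paper's rank-$1$ shortcut.
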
  The above corollary is one of multiple corollaries we obtain by combining \autoref{mainintro} and the various constructions that give finiteness of $F^2(C_1\times C_2)\new$.

		\subsection*{Acknowledgments} The first author was partially supported by the NSF grants DMS-~2001605 and DMS-~2302196, and the second author by a CRM-ISM postdoctoral fellowship. We are very thankful to 
		Professors Kartik Prasanna and Akshay Venkatesh for introducing us to this question and to Kartik Prasanna and Vasudevan Srinivas for allowing us to include in our article something about their  work in preparation. We are also thankful to Professors Spencer Bloch and Shuji Saito for showing interest in our article. Lastly we would like to thank our referees whose suggestions helped improve this article. 
		\black 
		
		\vspace{5pt}
		\section{Background} 
		
		 Throughout this article, $k$ will be a perfect field. 
		
		\begin{notn} Let $X$ be a smooth projective variety over $k$. We will always assume that $X$ has a $k$-rational point. \black  By a point $x\in X$ we will always mean a closed point and we will denote by $k(x)$ the residue field of $x$. \black Given a field extension $L/k$, we let $X(L)$ denote the set of $L$-rational points of $X$ with respect to the extension $k\hookrightarrow L$, and $X_L$ denote the base change to $L$. We use parentheses, as in $(X_1)_L$, if the variety is labelled with an index.
			
			
			
			Given $P\in X(k)$ and a finite extension $L/k$, we also use $P$ to denote the $L$-rational point of $X_{L}$ induced by base change. We write $[P]_L$ to denote the divisor on $X_{L}$ supported at $P$ with multiplicity one; if it is clear from context that we are considering divisors on $X_{L}$, we may drop the subscript and just write $[P]$.

		\end{notn}
		\subsection{Zero-cycles}\label{zerocycles}
		Let $X$ be a smooth projective variety over $k$.
		  The group $\CH_0(X)$ is the quotient of the free abelian group $\displaystyle Z_0(X):=\bigoplus_{x\in X}\Z$ on all closed points of $X$ modulo rational equivalence. The generators of $\CH_0(X)$ will be written as classes $[x]$ of closed points. There is a well-defined degree map 
		\[\deg: \CH_0(X)\to \Z, [x]\mapsto[k(x):k].\] We will denote by $F^1(X)$ the kernel of $\deg$. Moreover, there is a higher dimensional analog of the Abel-Jacobi map,
		\[\alb_X:F^1(X)\to\Alb_X(k),\] called the Albanese map of $X$, where $\Alb_X$ is the Albanese variety of $X$ (dual to the Picard variety $\Pic^0(X)$). When $X$ is a curve, $\alb_X$ is precisely the Abel-Jacobi map, and $\Alb_X=J_X$ is the usual Jacobian of $X$. We will denote by $F^2(X)$ the kernel of $\alb_X$. 
		
%
	From now on we suppose $X=C_1\times\cdots\times C_d$ is a product of smooth projective curves over $k$. In this case $\Alb_X=J_1\times\cdots\times J_d$, where $J_i$ is the Jacobian of $C_i$, for $i\in\{1,\ldots,d\}$.  Let $\pi_i:X\to C_i$ be the projection to the $i$-th component for $i\in\{1,\ldots,d\}$, which induces a pullback 
	 homomorphism, $\pi_i^{\star}:\CH_0(C_i)\to\CH_0(X)$.  We can define a zero-cycle on $X$ given the input of a zero-cycle on each component. Namely, we consider the homomorphism 
				\begin{align}\label{eXdef}
					\varepsilon_X:\CH_0(C_1)\otimes\cdots\otimes\CH_0(C_d)\to \CH_0(X)
				\end{align}
				defined as follows. Let $x_i\in C_i$ for each $i\in\{1,\ldots,d\}$. Define 
				\[\varepsilon_X([x_1]\otimes\cdots\otimes[x_d])=\pi_1^\star([x_1])\cdot\pi_2^\star([x_2])\cdot\cdots\cdot \pi_d^\star([x_d]),\] and extend linearly. Here $\cdot$ is the intersection product. 
		\begin{defn}\label{compwise} 
				
We define the componentwise subgroup $\CH_0(X)\new$ of $\CH_0(X)$ to be the image of $\varepsilon_X$, and $F^2(X)\new:=\CH_0(X)\new\cap F^2(X)$.
		\end{defn}
		
		\black 
		
		%

		
		 It is not known whether $\varepsilon_X$ is surjective  onto $\CH_0(X)$, but we can obtain a surjective map by considering the base change of $X$ to  the residue fields of all closed points $x$ of $X$. Given $x\in X$, denote by $\varepsilon_{x}$ the composition
		\[\varepsilon_{x}:\CH_0((C_1)_{k(x)})\otimes\cdots\otimes \CH_0((C_d)_{k(x)})\xrightarrow{\varepsilon_{X_{k(x)}}}\CH_0(X_{k(x)})\xrightarrow{ (\pi_x)_\star}\CH_0(X),\] where $(\pi_x)_\star$   is the proper push-forward induced by the base change $X_{k(x)}\to X$. Then
		\begin{align}\label{CHsurj}
			\bigoplus_{x\in X}\CH_0((C_1)_{k(x)})\otimes\cdots\otimes \CH_0((C_d)_{k(x)})\xrightarrow{\oplus\varepsilon_{x}}\CH_0(X)
		\end{align}
		is a surjection.
		For, every closed point $x\in X$ has a $k(x)$-rational point $(x_1,\ldots,x_d)\in C_1(k(x))\times\cdots \times C_d(k(x))$ lying above it, so that $\varepsilon_x([x_1]\otimes \cdots\otimes [x_d])=[x]$.
		
	From now on assume that for each $i=1,\ldots,d$ there exists a fixed $k$-rational point $O_i\in C_i(k)$, so that we have an isomorphism \[\alb_{C_i}:F^1(C_1)=\Pic^0(C_i)\xrightarrow{\simeq} J_i(k),\] and a decomposition $\CH_0(C_i)\xrightarrow{\simeq}\Z\oplus J_i(k)$, sending the class $[x_i]$ of a closed point to the pair  $([k(x_i):k],[x_i]- [k(x_i):k][O_i])$. \black We get a similar decomposition, $\CH_0((C_i)_L)\simeq\Z\oplus J_i(L)$, over any finite extension $L/k$ by using the pullback $[O_i]_L$ of the zero-cycle $[O_i]$. By expanding the tensor products in \eqref{CHsurj}, we obtain a surjection
		\begin{align}\label{eXexpanded}
			\Z\oplus\bigoplus_{\nu=1}^d\bigoplus_{1\leq i_1<\cdots<i_\nu\leq d}  \left(\bigoplus_{L/k\text{ finite}} J_{i_1}(k(x))\otimes\cdots\otimes  J_{i_\nu}(k(x))\right)  \twoheadrightarrow\CH_0(X).
		\end{align}

		\black 
		 Raskind and Spiess determined the kernel of this map.
		 The relations are given by \emph{Somekawa $K$-groups} $K(k;J_{i_1},\ldots,J_{i_\nu})$. Before we recall their precise result, we review some necessary facts about Somekawa $K$-groups. 

		\black

		\vspace{1pt}
		\subsection{The Somekawa $K$-group} 
		
		 Let $G_1,\ldots, G_r$ be semi-abelian varieties over $k$. The Somekawa $K$-group $K(k;G_1,\ldots,G_r)$ attached to $G_1,\ldots, G_r$ was defined by K. Kato and Somekawa in \cite{Somekawa1990}, and it is a generalization of the Milnor $K$-group $K_r^M(k)$. 
		We note that the signs need to be dropped from the original definition (see \cite[p.~10]{Raskind/Spiess2000} for the correction). In what follows we will only need the explicit definition of this $K$-group for abelian varieties, so we omit the more general cases. \black 
		\begin{notn}\label{normresnotation}
			  Let $A$ be an abelian variety over $k$. Given a tower $F/L/k$ of field extensions,  we denote by $\res_{F/L}:A(L)\to A(F)$ and $N_{F/L}:A(F)\to A(L)$ the usual restriction and norm map for abelian varieties. Here for $N_{F/L}$ to be well-defined we need to assume $F/L$ is a finite extension. \black 
			 	 If we want to draw attention to the choice of extension $\phi:L\to F$ (for instance, if we are considering multiple embeddings of $L$ into $F$), we may use the notation $\res_{\phi}$ or $N_{\phi}$.   The restriction map is in fact defined for an arbitrary field extension $F/L$; it is the obvious embedding induced by precomposing with the morphism $\Spec F\to \Spec L$. The norm map is only defined for finite extensions. We note that when $F/L$ is a finite Galois extension with Galois group $G$, then $N_{F/L}(a)=\sum_{g\in G}ga$, for $a\in A(F)$. \black 
		\end{notn}
		 With these norm and restriction maps, an abelian variety $A$ becomes a \textit{Mackey functor} over $k$ (cf. \cite[p.~13]{Raskind/Spiess2000}). The following definition is due to Kahn. 
		 
		\begin{defn} (see \cite{Kahn1992}) Let $A_1,\ldots,A_r$ be abelian varieties over $k$. Their Mackey product $(A_1\otimes^M\cdots\otimes^M A_r)(k)$ is defined to be $\displaystyle\left(\bigoplus_{F/k\text{ finite}}A_1(F)\otimes \cdots\otimes A_r(F)\right)/(\mathbf{PF})$ where $(\mathbf{PF})$ is the subgroup generated by the following type of elements. 
			Let $F/L/k$ \black be a tower of finite extensions. Let $a_i\in A_i(F)$ for some $i\in\{1,\ldots,r\}$ and $a_j\in A_j(L)$ for every $j\neq i$. We require 
			\begin{equation}\label{PF}\left(a_1\otimes\cdots\otimes N_{F/L}(a_i)\otimes\cdots\otimes a_r\right)-\left(\res_{F/L}(a_1)\otimes\cdots\otimes a_i\otimes\cdots\otimes \res_{F/L}(a_r)\right)\in(\mathbf{PF}).
			\end{equation}
		\end{defn} The relation \eqref{PF} is known as \textit{projection formula}. The elements of $(A_1\otimes^M\cdots\otimes^M A_r)(k)$ are traditionally denoted as symbols $\{a_1,\ldots,a_r\}_{L/k}$, for a given finite extension $L/k$ and $a_i\in A_i(L)$, $i\in\{1,\ldots,r\}$. Using the fact that $N_{F/L}\circ \res_{F/L}=[F:L]$, an immediate consequence of the projection formula is that $[F:L]\{a_1,\ldots,a_r\}_{L/k}=\{\res_{F/L}(a_1),\ldots,\res_{F/L}(a_r)\}_{F/k}$ for any tower $F/L/k$ of finite extensions. In particular, symbols are preserved under isomorphisms of extensions of $k$.

		Below we give the definition of the Somekawa $K$-group $K(k;A_1,\ldots,A_r)$ attached to abelian varieties $A_1,\ldots, A_r$. For the more general definition we refer to \cite[p.~107]{Somekawa1990}. 
		\black 
		\begin{defn} (\cite[p.~107]{Somekawa1990}) The Somekawa $K$-group $K(k;A_1,\ldots,A_r)$ attached to $A_1,\ldots,A_r$ is defined to be \[K(k;A_1,\ldots,A_r):=(A_1\otimes^M\cdots\otimes^M A_r)(k)/(\mathbf{WR}),\] where $(\mathbf{WR})$ is the subgroup generated by the following types of elements. Let $C$ be a smooth projective curve over $k$. 
		For each closed point $x\in C$,  let $\iota_x\in C(k(x))$ be the canonical inclusion $\Spec k(x)\to C$, 
		and let the extension $k\to k(x)$ be determined by composing $\iota_x$ with the structure morphism $C\to\Spec k$. 
	For every $f\in k(C)^\times$	and every collection of regular maps $g_i:C\to A_i$ for $i\in\{1,\ldots,r\}$ 
	  we insist that \black 
			
			\begin{equation}\label{WR}
				\sum_{x\in C}\ord_x(f) \{g_1\circ \iota_x,\ldots, g_r\circ \iota_x\}_{k(x)/k}\in(\mathbf{WR}).
			\end{equation} 
		\end{defn}
		The generators of $K(k;A_1,\ldots,A_r)$ will again be denoted as symbols $\{a_1,\ldots,a_r\}_{L/k}$. The relation \eqref{WR} is known as \textit{Weil reciprocity}. 
	When $r=1$ we have an isomorphism $K(k;A)\simeq A(k)$. When $k$ is algebraically closed, this follows by \cite[Chapter III, Theorem 1]{Serre1988}). For a proof in the general case see \cite[Corollary 3.7]{Gazaki2015}. \black 	
		
		\begin{notn}\label{symbk}
			We will denote by  $\Symb_k(A_1,\ldots,A_r)$ the subgroup of $K(k;A_1,\ldots,A_r)$ generated by symbols $\{a_{1},\ldots,a_{r}\}_{k/k}$ defined over $k$. 
		\end{notn}

		\subsection{The norm and restriction maps on $K$-groups}\label{normres}		 
		Let $L/k$ be a finite extension. There is a restriction map on $K$-groups \[\res_{L/k}:K(k;A_1,\ldots,A_r)\rightarrow K(L;(A_1)_{L},\ldots, (A_r)_{L})\] 
		defined as follows. Let $F/k$ be a finite extension. Since we assumed that the base field $k$ is perfect, the primitive element theorem implies that the extension $F/k$ is simple. That is, $F\simeq k[X]/(f(X))$ for some irreducible polynomial $f(X)\in k[X]$ (where restricting the isomorphism to the constant field gives the embedding $k\hookrightarrow F$). This gives an isomorphism 
		\[\displaystyle F\otimes_k L\simeq L[X]/(f(X))\simeq\prod_{j=1}^n L[X]/(f_j(X))=\prod_{j=1}^n L_j,\]
		where $f(X)=\prod_j f_j(X)$ is the factorization of $f(X)$ into irreducible polynomials in $L[X]$.  For each $j$, the given extensions $L/k$ and $F/k$ induce canonical extensions $L_j/L$ (by inclusion of scalars $L\to L[X]/(f_j(X))$) and $L_j/F$ (by $k[X]/(f(X))\to L[X]/(f(X))\to L[X]/(f_j(X))$, sending $x\mapsto x$ and constants along the inclusion $k\hookrightarrow L$). 
		Given $a_i\in A_i(F)$ for $i\in\{1,\ldots,r\}$,  we define 
		\begin{align}\label{ressymbol}
			\res_{L/k}(\{a_1,\ldots,a_r\}_{F/k})=\sum_{j=1}^n\{\res_{L_j/F}(a_1),\ldots,\res_{L_j/F}(a_r)\}_{L_j/L}.
		\end{align}
		We refer to \cite[(1.3)]{Somekawa1990} for a definition of the restriction map in the imperfect case. 
		
		Moreover, there is a norm map 
		\[N_{L/k}:K(L;(A_1)_{L},\ldots, (A_r)_{L})\rightarrow K(k;A_1,\ldots,A_r)\] sending a symbol $\{b_1,\ldots,b_r\}_{F/L}$ to $\{b_1,\ldots,b_r\}_{F/k}$,  with the extension $F/k$ determined by composing the  embeddings $k\hookrightarrow L\hookrightarrow F$.  As usual, we have an equality $N_{L/k}\circ \res_{L/k}=[L:k]$.

		\subsection{$K$-group of a product of curves}\label{sec:F2iso}
		From now on we assume that $X=C_1\times\cdots\times C_d$ is a product of smooth projective curves over $k$, with fixed $k$-rational points $O_i\in C_i(k)$ for each $i\in\{1,\ldots,d\}$. To each nonempty subset $\{i_1,\ldots,i_\nu\}\subseteq\{1,\ldots,d\}$, we have a Somekawa $K$-group $K(k;J_{i_1},\ldots,J_{i_\nu})$ associated to the Jacobians $J_{i_1},\ldots,J_{i_\nu}$.
		
		In this setting, Raskind and Spiess (\cite[Corollary 2.4.1]{Raskind/Spiess2000}) established an isomorphism  
		
		\begin{equation}\label{RS}\widehat{\varepsilon}:\Z\oplus\bigoplus_{\nu=1}^d\bigoplus_{1\leq i_1<\cdots<i_\nu\leq d} K(k;J_{i_1},\ldots,J_{i_\nu})\xrightarrow{\simeq} \CH_0(X).
		\end{equation}
		
		The map $\widehat{\varepsilon}$ is quite explicit. Given a finite extension $k\hookrightarrow L$ and a symbol $\{a_{i_1},\ldots,a_{i_\nu}\}_{L/k}\in K(k;J_{i_1},\ldots,J_{i_\nu})$, define $z_i\in \CH_0((C_i)_{L})$ by $z_i:=a_i$ if $i\in\{i_1,\ldots,i_\nu\}$, and $z_i:=[O_i]_L$ otherwise. Then 
		
		\[\widehat{\varepsilon}(\{a_{i_1},\ldots,a_{i_\nu}\}_{L/k}):=({(\pi_{L/k})_\star}\circ\varepsilon_{X_L})(z_1\otimes\cdots\otimes z_d)={ (\pi_{L/k})_\star}(\pi_1^\star(z_1)\cdot\pi_2^\star(z_2)\cdot\cdots\cdot \pi_d^\star(z_d)),\] \black 
		where $\CH_0(X_L)\xrightarrow{(\pi_{L/k})_\star}\CH_0(X)$ is the pushforward map induced by base change along $L/k$. 

One can then define a finite filtration on $\CH_0(X)$ by setting $\Fil^0(X)=\CH_0(X)$, and 
\[\Fil^n(X):=\widehat{\varepsilon}\left(\bigoplus_{\nu=n}^d\bigoplus_{1\leq i_1<\cdots<i_\nu\leq d} K(k;J_{i_1},\ldots,J_{i_\nu})\right)\]	for $1\leq n\leq d$ (see \cite[Example 2.2]{Yamazaki2005}). 	This gives identifications $\Fil^1(X)=F^1(X)=\ker(\deg)$, $\Fil^2(X)=F^2(X)=\ker(\alb_X)$, and we obtain an isomorphism 
\begin{eqnarray}\label{F2iso}\bigoplus_{\nu=2}^d\bigoplus_{1\leq i_1<\cdots<i_\nu\leq d} K(k;J_{i_1},\ldots,J_{i_\nu}) \simeq F^2(X).\end{eqnarray} 
\black

		
		Recall that $\Symb_k(J_{i_1},\ldots,J_{i_\nu})$ denotes the subgroup of $K(k;J_{i_1},\ldots,J_{i_\nu})$ generated by symbols of the form $\{a_{i_1},\ldots,a_{i_\nu}\}_{k/k}$ (cf. \autoref{symbk}). The following lemma follows easily by the explicit description of the map $\widehat{\varepsilon}$. 
		\begin{lem}\label{RSlem}  The isomorphism $\widehat{\varepsilon}$ induces identifications \[\Z\oplus\bigoplus_{\nu=1}^d\bigoplus_{1\leq i_1<\cdots<i_\nu\leq d} \Symb_k(J_{i_1},\ldots,J_{i_\nu}) \simeq \CH_0(X)\new\] and 
				\begin{align}\label{F2compiso}
			F^2(X)\new\simeq\bigoplus_{\nu=2}^d\bigoplus_{1\leq i_1<\cdots<i_\nu\leq d} \Symb_k(J_{i_1},\ldots,J_{i_\nu}).
		\end{align}
		\end{lem}
		\begin{proof} We only prove the first claim. The second follows in a similar manner. \black 
		It is clear by definition that an element in $\Z$ or in $\Symb_k(J_{i_1},\ldots,J_{i_\nu})$ maps into $\CH_0(X)\new$. Conversely, write an arbitrary element $z_1\otimes\cdots\otimes z_d\in\CH_0(C_1)\otimes\cdots \otimes\CH_0(C_d)$ as
			\[\left((z_1-\deg(z_1)[O_1])+\deg(z_1)[O_1]\right)\otimes\cdots\otimes \left((z_d-\deg(z_d)[O_d])+\deg(z_d)[O_d]\right)\]
			and expand. Each term in the expansion will have in the $i$-th component either a multiple of $[O_i]$ or an element of $J_i(k)$, and so its image under $\varepsilon_X$ will be either in $\widehat{\varepsilon}(\Z)$ or in $\widehat{\varepsilon}(\Symb_k(J_{i_1},\ldots,J_{i_\nu}))$ for some choice of $i_1,\ldots,i_\nu$. 
	
		\end{proof}

\black 		

		\begin{rem}\label{compconjtofullconj} All the above discussion shows that if the base change $X_L$ satisfies the weaker \autoref{bbcompconj} for all finite extensions $L/k$, then $F^2(X)$ is generated by torsion elements, so $X$ satisfies \autoref{bbconj} \black 
		\end{rem}	 	

		\vspace{3pt}
		

		\section{Product Formula and Applications}\label{productformulasection}	
		\begin{prop}\label{mainprop} Let $A_1,\ldots, A_r$ be abelian varieties over a perfect field $k$ with $r\geq 3$. Let $L/k$ be a finite extension. There is a well-defined homomorphism  
\begin{eqnarray*}\Psi_k:&& K(k;A_1,A_2)\otimes A_3(k)\otimes\cdots\otimes A_r(k)\to K(k;A_1,\ldots, A_r) \\
&&\{a_1,a_2\}_{F/k}\otimes a_3\otimes\cdots\otimes a_r\mapsto\{a_1,a_2,\res_{F/k}(a_3)\ldots,\res_{F/k}(a_r)\}_{F/k},
\end{eqnarray*}	which extends for every finite extension $L/k$ to a homomorphism 
\[\Phi_L:K(k;A_1,A_2)\otimes A_3(L)\otimes\cdots\otimes A_r(L)\to K(k;A_1,\ldots, A_r)\] given by $\Phi_L=N_{L/k}\circ\Psi_L\circ(\res_{L/k}\otimes 1_{A_3}\otimes\cdots\otimes 1_{A_r})$, where $N_{L/k}$ and $\res_{L/k}$ are the norm and restriction maps on Somekawa $K$-groups (see \autoref{normres}). \black 
		 
		\end{prop} 
		\begin{proof} It is enough to show that the homomorphism $\Psi_k$ is well-defined. \black 
				We first verify the projection formula \eqref{PF}. Let $F/K/k$ be a tower of finite extensions, and suppose $a_1\in A_1(F), a_2\in A_2(K)$. Moreover, let $a_i\in A_i(k)$ for $3\leq i\leq r$. We compute
			\begin{eqnarray*}
				&&\Psi_k(\{a_1,\res_{F/K}(a_2)\}_{F/k}\otimes a_3\otimes\cdots\otimes a_r)=
				\{a_1,\res_{F/K}(a_2),\res_{F/k}(a_3),\ldots,\res_{F/k}(a_r)\}_{F/k}=\\
				&&\{a_1,\res_{F/K}(a_2),\res_{F/K}(\res_{K/k}(a_3)),\ldots,\res_{F/K}(\res_{K/k}(a_r))\}_{F/k}\stackrel{(\mathbf{PF})}{=}\\
				&&\{N_{F/K}(a_1),a_2, \res_{K/k}(a_3),\ldots,\res_{K/k}(a_r)\}_{K/k}=\Psi_k(\{N_{F/K}(a_1),a_2\}_{K/k}\otimes a_3\otimes\cdots\otimes a_r).
			\end{eqnarray*}
			The symmetric relation for $a_1\in A_1(K), a_2\in A_2(F)$ is analogous. 
			
			We next verify Weil reciprocity \eqref{WR}. Let $C$ be a smooth projective curve over $k$ and suppose we have regular maps $g_i:C\to A_i$ for $i=1,2$.  Let $a_i\in A_i(k)$ for $i\in\{3,\ldots,r\}$. Let $C\xrightarrow{\pi}\Spec k$ be the structure morphism. For each $i\in\{3,\ldots,r\}$ define $g_i:C\to A_i$ to be the constant map $g_i:=a_i\circ \pi$. Let $f\in k(C)^\times$. \black  We have
			
\begin{eqnarray*} 
				&&\Phi_k\left(\sum_{x\in C}\ord_x(f)\{g_1\circ\iota_x,g_2\circ\iota_x\}_{k(x)/k} \otimes a_3\otimes\cdots \otimes a_r\right)=\\
				&& \sum_{x\in C}\ord_x(f)\{g_1\circ\iota_x,g_2\circ\iota_x, \res_{k(x)/k}(a_3),\ldots,\res_{k(x)/k}(a_r)\}_{k(x)/k}=\\
				&&\sum_{x\in C}\ord_x(f)\{g_1\circ\iota_x,g_2\circ\iota_x, g_3\circ\iota_x,\ldots,g_r\circ\iota_x\}_{k(x)/k}\stackrel{(\mathbf{WR})}{=}0.
			\end{eqnarray*}

		\end{proof}
		
		\begin{rem} \autoref{mainprop} can be thought of as the analog of the product formula for Milnor $K$-groups. Kahn and Yamazaki (cf. \cite{Kahn/Yamazaki2013}) in fact defined a tensor product structure in the category of homotopy invariant Nisnevich sheaves with transfers, which when evaluated at $\Spec(k)$ coincides with a Somekawa $K$-group. Such a property therefore is not surprising.   
		\end{rem}
		
		\autoref{mainintro} follows now as an easy corollary to \autoref{mainprop}. We restate this in greater generality here. 
		\begin{cor}\label{main} Let $X=C_1\times\cdots\times C_d$ be a product of smooth projective curves over a perfect field $k$  with $C_i(k)\neq \emptyset$ for $i\in\{1,\ldots,d\}$.  Suppose that for each $1\leq i<j\leq d$ the group $F^2(C_i\times C_j)$ is torsion. Then $F^2(X)$ is torsion. In the special case when $k$ is an algebraic number field, we conclude that if \autoref{bbconj} is true for any subproduct of two curves, then it is true for $X$. 
		\end{cor}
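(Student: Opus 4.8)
The plan is to combine the Raskind--Spiess decomposition of \autoref{RSlem} with the product formula of \autoref{mainprop}, applied not over $k$ itself but over each finite extension of $k$. By the third isomorphism of \autoref{RSlem},
\[
F^2(X)\simeq\bigoplus_{\nu=2}^d\ \bigoplus_{1\le i_1<\cdots<i_\nu\le d} K(k;J_{i_1},\ldots,J_{i_\nu}),
\]
and a direct sum of abelian groups is torsion if and only if each summand is; so it suffices to prove that each $K(k;J_{i_1},\ldots,J_{i_\nu})$ is torsion. For $\nu=2$ this is immediate: applying \autoref{RSlem} to the two-curve product $C_i\times C_j$ identifies $K(k;J_i,J_j)\simeq F^2(C_i\times C_j)$, which is torsion by hypothesis. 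The content is therefore the case $\nu\ge 3$, and I would treat each such group directly, with no induction on $\nu$.

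First I would reduce an arbitrary generator to an identity symbol over an extension. Every symbol satisfies $\{a_1,\ldots,a_\nu\}_{F/k}=N_{F/k}\big(\{a_1,\ldots,a_\nu\}_{F/F}\big)$, where $N_{F/k}$ is the norm map of \autoref{normres} and $\{a_1,\ldots,a_\nu\}_{F/F}\in\Symb_F(J_{i_1 F},\ldots,J_{i_\nu F})$; hence $K(k;J_{i_1},\ldots,J_{i_\nu})=\sum_{F/k\ \text{finite}} N_{F/k}\big(\Symb_F(J_{i_1 F},\ldots,J_{i_\nu F})\big)$. Now I apply \autoref{mainprop} with base field $F$ (which is again perfect) and trivial extension $L=F$: the resulting concatenation map $\Psi$ sends $\{a_1,a_2\}_{F/F}\otimes a_3\otimes\cdots\otimes a_\nu$ to the identity symbol $\{a_1,\ldots,a_\nu\}_{F/F}$. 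Since identity symbols generate $\Symb_F$, this exhibits $\Symb_F(J_{i_1 F},\ldots,J_{i_\nu F})$ as the image under $\Psi$ of $\Symb_F(J_{i_1 F},J_{i_2 F})\otimes J_{i_3}(F)\otimes\cdots\otimes J_{i_\nu}(F)$.

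The torsion conclusion then follows formally. Applying \autoref{RSlem} over $F$ to the two-curve product gives $K(F;J_{i_1 F},J_{i_2 F})\simeq F^2\big((C_{i_1}\times C_{i_2})_F\big)$, so $\Symb_F(J_{i_1 F},J_{i_2 F})$ is torsion; tensoring a torsion group with any abelian group again yields a torsion group, so the domain of $\Psi$ above is torsion, whence $\Symb_F(J_{i_1 F},\ldots,J_{i_\nu F})$ and its image $N_{F/k}(\Symb_F(\ldots))$ are torsion; summing over all finite $F/k$ shows $K(k;J_{i_1},\ldots,J_{i_\nu})$ is torsion. The one genuinely nontrivial point --- and the main obstacle, explaining why the product formula over $k$ alone does not suffice --- is that a symbol defined over a nontrivial extension $F$ is \emph{not} in the image of the two-variable group $K(k;J_{i_1},J_{i_2})$ over $k$ (this is already visible in the $L$-dependence of \autoref{mainprop}). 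The norm map circumvents this by rewriting such a symbol as a norm of an identity symbol over $F$, at the cost of invoking the pairwise hypothesis over every finite extension $F/k$. For a number field $k$ this is precisely the statement that \autoref{bbconj} holds for the surfaces $C_i\times C_j$ over all finite extensions, which is subsumed in assuming the conjecture for those subproducts; this gives the stated conclusion for $X$.
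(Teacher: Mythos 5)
Your argument has a genuine gap: it proves the corollary only under a strictly stronger hypothesis than the one stated. After writing $\{a_1,\ldots,a_\nu\}_{F/k}=N_{F/k}\big(\{a_1,\ldots,a_\nu\}_{F/F}\big)$ and feeding the identity symbol into the concatenation map over $F$, the two-variable input you need to be torsion is $\Symb_F(J_{i_1F},J_{i_2F})\subseteq K(F;J_{i_1F},J_{i_2F})\simeq F^2\big((C_{i_1}\times C_{i_2})_F\big)$ --- that is, the pairwise group \emph{after base change to $F$}. The corollary only assumes $F^2(C_{i_1}\times C_{i_2})$ is torsion over $k$, and this does not imply torsionness over $F$: the identity $N_{F/k}\circ\res_{F/k}=[F:k]$ transfers torsionness only downward (from $F$ to $k$), never upward, and base change can create genuinely new cycles (for instance the Mordell--Weil ranks can grow). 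Your closing claim that the hypothesis over all finite extensions is ``subsumed in assuming the conjecture for those subproducts'' is exactly where the argument breaks: \autoref{bbconj} for $C_i\times C_j$ over $k$ is a statement about $F^2(C_i\times C_j)$ only. Indeed, the paper records your version of the statement as an easy remark immediately after the corollary, where the stronger hypothesis (torsionness of $F^2(C_{iL}\times C_{jL})$ for all finite $L/k$) is invoked precisely to get something the weaker hypothesis does not give.

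The paper's proof avoids this by never leaving the $K$-group over $k$. The element $\{a_{i_1},a_{i_2}\}_{L/k}$ is already an element of $K(k;J_{i_1},J_{i_2})$ --- the group that is torsion by hypothesis --- and the point of the $L$-dependence in \autoref{mainprop}, which you noticed but did not exploit, is that $\Phi$ is defined on $K(k;J_{i_1},J_{i_2})\otimes J_{i_3}(L)\otimes\cdots\otimes J_{i_\nu}(L)$, mixing the $K$-group over $k$ with points over $L$. After reducing to the case that $L/k$ is normal (via the projection formula and the normal closure), one computes $\Phi(\{a_{i_1},a_{i_2}\}_{L/k}\otimes a_{i_3}\otimes\cdots\otimes a_{i_\nu})=[L:k]\{a_{i_1},\ldots,a_{i_\nu}\}_{L/k}$, so torsionness of the input over $k$ forces torsionness of the general symbol. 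To repair your write-up, apply $\Phi$ with $F=L$ the field of definition of the symbol, rather than rewriting the symbol as a norm of an identity symbol over $L$.
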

		
		\begin{proof} 
			The assumption that $F^2(C_i\times C_j)$ is torsion for each $1\leq i< j\leq d$ means precisely that the group 
			$\displaystyle\bigoplus_{1\leq i< j\leq d}K(k;J_i,J_j)$ is torsion. If $d=2$ then we are done, so let us assume $d\geq 3$. Using the identification \eqref{F2iso}, it is enough to show that for every $3\leq\nu\leq d$ the group $K(k;J_{i_1},\ldots,J_{i_\nu})$ is torsion. Thus, the claim comes down to showing that if $A_1,\ldots, A_r$ are abelian varieties over $k$ such that the Somekawa $K$-group $K(k;A_1,A_2)$ is torsion, then so is the group $K(k;A_1,\ldots,A_r)$.  
			
			Let $\{a_1,\ldots, a_r\}_{L/k}\in K(k;A_1,\ldots,A_r)$, where $L/k$ is a finite extension and $a_i\in A_i(L)$ for $i\in\{1,\ldots,r\}$.
			If $\widehat{L}/k$ is the  Galois \black  closure of $L/k$, then \black 
			\begin{align*}
				[\widehat{L}:L]\{a_1,\ldots, a_r\}_{L/k}&\stackrel{(\mathbf{PF})}{=}\{\res_{\widehat{L}/L}(a_{i_1}),\res_{\widehat{L}/L}(a_{1}),\ldots, \res_{\widehat{L}/L}(a_{r})\}_{\widehat{L}/k}.
			\end{align*}
			Thus, to prove $\{a_1,a_2,\ldots, a_r\}_{L/k}$ is torsion, it is enough to consider the case that $L/k$ is a Galois extension with Galois group $G$. \black 
						We consider the homomorphism 
			\[\Phi_L:K(k;A_1,A_2)\otimes A_3(L)\otimes\cdots\otimes A_r(L)\to K(k;A_1,\ldots, A_r)\] of \autoref{mainprop}. To show that the symbol $\{a_1,a_2,\ldots, a_r\}_{L/k}$ is torsion, it suffices to show that an integer multiple of it lies in the image of $\Phi_L$. We have, 
\begin{eqnarray*}
&&\Phi_L(\{a_1,a_2\}_{L/k}\otimes a_3\otimes\cdots\otimes a_r)=\\
&&N_{L/k}\circ\Psi_L\circ(\res_{L/k}\otimes 1_{A_3}\otimes\cdots\otimes 1_{A_r})(\{a_1,a_2\}_{L/k}\otimes a_3\otimes\cdots\otimes a_r). 
\end{eqnarray*} 
Since $L/k$ is finite Galois, we have an isomorphism of $L$-algebras,
			\[L\otimes_k L\simeq\prod_{\sigma\in G}L.\] In particular, in the definition of the restriction map on Somekawa $K$-groups (see \autoref{normres}) we have $L_j\simeq L$ for all $j=1,\ldots,[L:k]$, and hence  
\begin{eqnarray*}
&&\res_{L/k}(\{a_1,a_2\}_{L/k})=\sum_{j=1}^{[L:k]}\{a_1,a_2\}_{L/L}=[L:k]\{a_1,a_2\}_{L/L}. 
\end{eqnarray*} We conclude that 
\begin{eqnarray*}
\Phi_L(\{a_1,a_2\}_{L/k}\otimes a_3\otimes\cdots\otimes a_r)=&&
N_{L/k}([L:k] \{a_1,\ldots, a_r\}_{L/L})\\=
&&[L:k]\{a_1,\ldots, a_r\}_{L/k}. 
\end{eqnarray*} 

			\black

		\end{proof}
		
		\begin{rem} The assumption $C_i(k)\neq\emptyset$ can be removed, if we instead assume that for some finite extension $L/k$, we have $C_i(L)\neq\emptyset$ for all $i$ and \autoref{bbconj} holds for every product $(C_i)_{L}\times (C_j)_{L}$ with $1\leq i<j\leq d$.   This follows easily by the formula  $N_{L/k}\circ\res_{L/k}=[L:k]$.   
		\end{rem}
		
%
%
 The previous corollary shows that \autoref{bbconj} holds for arbitrary products of curves if it holds for pairwise products; the following corollary gives the analogous statement for the weaker \autoref{bbcompconj}.
	 	
		\begin{cor}\label{corollaryS}
			Let $X=C_1\times\cdots\times C_d$ be a product of smooth projective curves over a perfect field $k$  with $C_i(k)\neq \emptyset$ for $i\in\{1,\ldots,d\}$.  Suppose that for each $1\leq i<j\leq d$ the group $F^2(C_i\times C_j)\new$ is finite. Then $F^2(X)\new$ is finite. 
		\end{cor} 
		\black
		\begin{proof}
			Since $C_i(k)\neq \emptyset$ for all $i$ the Raskind-Spiess isomorphism applies, and so by \eqref{F2compiso}, the assumption that $F^2(C_{i}\times C_{j})\new$ is finite for all $1\leq i<j\leq d$ amounts to saying that $\Symb_k(J_{i}, J_{j})$ is finite. The corollary then follows by \autoref{mainprop} after noticing the surjectivity of the homomorphism 
			\begin{align*}
				\Psi_k:\Symb_k(J_{i_1}, J_{i_2})\otimes J_{i_3}(k)\otimes\cdots\otimes  J_{i_\nu}(k)\rightarrow\Symb_k(J_{i_1},\ldots,J_{i_\nu}).
			\end{align*}
		\end{proof}
		 In the following section, we will give many applications of \autoref{corollaryS}.  
		
		\vspace{2pt}
		\section{Finiteness of $F^2(C_1\times C_2)\new$}\label{finiteoverk}

		The goal of this section is to determine pairs of curves $C_1,C_2$ such that $F^2(C_1\times C_2)\new$ is finite. Note that if $C$ is a curve then $F^2(C\times C)\new$ is not a priori finite, so the case $C_1=C_2$ is often nontrivial.

		\begin{conv} Let  $C_1,C_2$ be curves over $k$ with Jacobians $J_1$ and $J_2$, and let $X=C_1\times C_2$. We will suppose there are fixed rational points $O_1\in C_1(k)$ and $O_2\in C_2(k)$; if $C_i$ is an elliptic curve, we will always take $O_i$ to be the identity of $C_i(k)$. 
		\end{conv}
		
		There are two main principles we use in order to examine  whether $F^2(X)\new$ is finite. \autoref{finiteindex} allows us to reduce the problem to checking a finite list of relations, and \autoref{torsionfromhyper} gives us a way to produce these relations. 
		

		\begin{lem}\label{finiteindex}
			Let $B$ be a list of $(\rk J_1(k))(\rk J_2(k))$ linearly independent elements of $J_1(k)\otimes J_2(k)$. If $\varepsilon_X(t)$ is torsion for all $t\in B$, then $F^2(X)\new$ is finite.
		\end{lem}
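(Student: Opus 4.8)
The plan is to reduce the statement to elementary finitely generated abelian group theory, using two inputs already available: the Mordell--Weil theorem (so that $J_i(k)$ is finitely generated) and the fact, recorded earlier, that $\varepsilon_X\colon J_1(k)\otimes_{\Z} J_2(k)\to F^2(X)$ is surjective onto $F^2(X)\new$.

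First I would record the structure of $M:=J_1(k)\otimes_{\Z}J_2(k)$. By Mordell--Weil we may write $J_i(k)\cong\Z^{r_i}\oplus T_i$ with $r_i=\rk J_i(k)$ and $T_i$ finite. Tensoring, $M$ is finitely generated, and its free rank is exactly $r_1r_2=(\rk J_1(k))(\rk J_2(k))$, since the mixed contributions $\Z^{r_i}\otimes T_j$ and $T_1\otimes T_2$ are all finite. In particular the torsion subgroup $T(M)$ is finite and $M/T(M)\cong\Z^{r_1r_2}$. This rank count is what guarantees that a list of $r_1r_2$ relations is exactly the right number to hope for finiteness.

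Next I would show that $L=\{t_1,\dots,t_{r_1r_2}\}$ generates a finite-index subgroup $N:=\langle L\rangle\subseteq M$. The key elementary fact is that $r_1r_2$ linearly independent elements in a finitely generated abelian group of rank $r_1r_2$ always span a subgroup of finite index. Concretely, I would pass to $M/T(M)\cong\Z^{r_1r_2}$: $\Z$-linear independence of the $t_i$ in $M$ forces linear independence of their images (any relation among the images lifts, after clearing torsion, to a relation among the $t_i$), and a square integer matrix with linearly independent columns has nonzero determinant, so the images generate a finite-index sublattice of $\Z^{r_1r_2}$. Since $T(M)$ is finite, it follows that $m:=[M:N]<\infty$.

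Finally I would assemble the conclusion. The group $\varepsilon_X(N)=\langle\varepsilon_X(t_1),\dots,\varepsilon_X(t_{r_1r_2})\rangle$ is finitely generated and, by hypothesis, generated by torsion elements, hence finite. Because $\varepsilon_X\colon M\twoheadrightarrow F^2(X)\new$ is surjective and carries $N$ into $\varepsilon_X(N)$, it induces a surjection $M/N\twoheadrightarrow F^2(X)\new/\varepsilon_X(N)$; as $M/N$ is finite, the quotient $F^2(X)\new/\varepsilon_X(N)$ is finite. Combining this with the finiteness of $\varepsilon_X(N)$ gives $|F^2(X)\new|=[F^2(X)\new:\varepsilon_X(N)]\cdot|\varepsilon_X(N)|<\infty$. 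There is no substantive obstacle here: the whole content is the rank computation for the tensor product together with the lattice-theoretic fact that rank-many independent elements span a finite-index subgroup. The only points needing care are verifying that the free rank of $J_1(k)\otimes J_2(k)$ is precisely $r_1r_2$, and interpreting ``linearly independent'' as $\Z$-independence modulo torsion so that the finite-index claim is valid.
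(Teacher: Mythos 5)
Your proof is correct and follows essentially the same route as the paper: both arguments rest on the fact (from the Raskind--Spiess identification) that $F^2(X)_{\mathrm{comp}}$ is the image of the finitely generated group $J_1(k)\otimes J_2(k)$ under $\varepsilon_X$, together with the observation that $(\rk J_1(k))(\rk J_2(k))$ linearly independent elements generate a full-rank, hence finite-index, subgroup. The paper phrases the conclusion as ``every element of $J_1(k)\otimes J_2(k)$ maps to torsion, so the image is a finitely generated torsion group,'' while you split off the finite quotient $M/N$ explicitly; these are equivalent bookkeeping choices.
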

		\begin{proof}
			By \eqref{F2compiso}, $F^2(X)\new$ is the image of the finitely generated abelian group $J_1(k)\otimes J_2(k)$ under $\varepsilon_X$. Since $B$ generates a full-rank subgroup of $J_1(k)\otimes J_2(k)$, every element of $J_1(k)\otimes J_2(k)$ maps to torsion in $F^2(X)\new$.
		\end{proof}

		We list two consequences of \autoref{finiteindex}. The first is immediate.
		
		\begin{cor}\label{rank0}
			If $J_1(k)$ or $J_2(k)$ has rank $0$, then $F^2(X)\new$ is finite.
		\end{cor}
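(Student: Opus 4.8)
The plan is to read this off directly from \autoref{finiteindex}, since the corollary is explicitly billed as an immediate consequence. First I would assume without loss of generality that $\rk J_1(k)=0$. Because the rank of a tensor product of finitely generated abelian groups is the product of the ranks, we have $(\rk J_1(k))(\rk J_2(k))=0$. A list $L$ of $(\rk J_1(k))(\rk J_2(k))$ linearly independent elements of $J_1(k)\otimes J_2(k)$ is therefore the empty list, so the hypothesis of \autoref{finiteindex}---that $\varepsilon_X(t)$ be torsion for every $t\in L$---holds vacuously. Applying \autoref{finiteindex} then yields that $F^2(X)\new$ is finite.

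As an independent sanity check (and to make the statement self-contained without routing through \autoref{finiteindex}), I would note the direct argument. By the Mordell--Weil theorem $J_1(k)$ and $J_2(k)$ are finitely generated, so $\rk J_1(k)=0$ forces $J_1(k)$ to be finite. Writing $J_2(k)\cong\Z^r\oplus T$ with $T$ finite gives $J_1(k)\otimes J_2(k)\cong J_1(k)^r\oplus\bigl(J_1(k)\otimes T\bigr)$, which is a finite group. Since $F^2(X)\new$ is the image of $J_1(k)\otimes J_2(k)$ under $\varepsilon_X$ by \eqref{F2compiso}, it is a quotient of a finite group and hence finite.

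There is no substantive obstacle here; the only point requiring any care is the bookkeeping that a vanishing rank in either Jacobian kills the rank of the tensor product, equivalently that it renders $J_1(k)\otimes J_2(k)$ finite. I would present the one-line vacuous application of \autoref{finiteindex} as the proof, since it is the cleanest route given the machinery already in place.
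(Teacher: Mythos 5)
Your proof is correct and matches the paper's intended argument: the paper states \autoref{rank0} as an immediate consequence of \autoref{finiteindex} with no further justification, and your vacuous application of that lemma (the required list is empty since the tensor product has rank zero) is exactly the point. Your supplementary direct argument via finiteness of $J_1(k)\otimes J_2(k)$ is also sound and is essentially what the proof of \autoref{finiteindex} reduces to in this case.
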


		\begin{cor}\label{Sisogclasses}
			Let $C_1, C_2, C_2'$ be curves over $k$. If $F^2(C_1\times C_2)\new$ is finite and the Jacobians of $C_2$ and $C_2'$ are isogenous, then $F^2(C_1\times C_2')\new$ is finite.
		\end{cor}
		
		\begin{proof}
			This follows by covariant functoriality of the Somekawa $K$-group (cf.~\cite[p.~4]{Somekawa1990}). Namely, let $J_2$ and $J_2'$ be the Jacobians of $C_2$ and $C_2'$ respectively; if $\phi:J_2\to J_2'$ is an isogeny, then it induces a group homomorphism $K(k;J_1,J_2)\to K(k;J_1,J_2')$ sending a symbol $\{a,b\}_{L/k}$ to $\{a,\phi(b)\}_{L/k}$.
			
			Finiteness of $F^2(C_1\times C_2)\new$ implies that  $\{P_1,P_2\}_{k/k}\in \Symb_k(J_1,J_2)$ is torsion for all $P_1\in J_1(k)$ and $P_2\in J_2(k)$, and so 
			\[\varepsilon_X(P_1\otimes \phi(P_2))=\{P_1,\phi(P_2)\}_{k/k}\in \Symb_k(J_1,J_2')\]
			is torsion. Since the map $J_2(k)\to J_2'(k)$ induced by $\phi$ has finite cokernel, elements of the form $P_1\otimes\phi(P_2)$ generate a full-rank subgroup of $J_1(k)\otimes J_2(k)$.
			
		\end{proof}
	\black
		

		
		\vspace{1pt}
		
		Given an elliptic or hyperelliptic curve $H$, let $P\mapsto \overline{P}$ denote negation or the hyperelliptic involution, respectively. 
		\begin{prop} \label{torsionfromhyper}
			Let $H$ be an elliptic or hyperelliptic curve, let $P\in H(\overline{k})$, and let $\phi_i:H\to C_i$ be a morphism for each $i=1,2$. Suppose that $\phi_i(P)\in C_i(k)$ for each $i=1,2$, and also suppose there exists $W\in H(\overline{k})$ with $W=\overline{W}$ and such that $[\phi_i(W)]-[O_i]\in J_i(\overline{k})$ is torsion for $i=1,2$. Then the symbol
			\[\{[\phi_1(P)]-[O_1],[\phi_{2}(P)]-[O_2]\}_{k/k}\in \Symb_k(J_1,J_2)\]
			is torsion.
		\end{prop}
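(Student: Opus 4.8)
The plan is to produce the asserted torsion relation by applying Weil reciprocity \eqref{WR} to a well-chosen rational function on $H$, after passing to a field over which $P$ and $W$ become rational, and then descending via the norm map. First I would introduce the regular maps $g_i\colon H\to J_i$ obtained by composing $\phi_i\colon H\to C_i$ with the Abel-Jacobi embedding $C_i\to J_i$ attached to $O_i$; concretely $g_i(x)=[\phi_i(x)]-[O_i]$, and these are morphisms defined over $k$ since $O_i\in C_i(k)$. Write $\tau_i:=g_i(W)=[\phi_i(W)]-[O_i]$, which is torsion by hypothesis. The key geometric input is the linear equivalence $[x]+[\overline{x}]\sim 2[W]$ on $H$, valid for every $x$: both divisors are fibers of the degree-two map $H\to\mathbb{P}^1$ defining the (hyper)elliptic structure, and $W=\overline{W}$ is precisely a ramification point of this map, so its fiber is $2[W]$. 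Applying $\phi_i$ to this equivalence and subtracting $2[O_i]$ yields the \emph{involution relation}
\[
g_i(x)+g_i(\overline{x})=2\tau_i\in J_i(\overline{k})\qquad(i=1,2),
\]
so that modulo torsion $g_i(\overline{x})\equiv -g_i(x)$.

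Next I would choose a finite extension $L/k$ over which both $P$ and $W$ are rational. Since $H(L)\neq\emptyset$, the equivalence $[P]+[\overline{P}]\sim 2[W]$ persists over $L$ (an $L$-rational degree-zero class trivial over $\overline{k}$ is already trivial in $J_H(L)\hookrightarrow J_H(\overline{k})$), so there is $f\in L(H)^\times$ with $\operatorname{div}(f)=[P]+[\overline{P}]-2[W]$ (when $P=\overline{P}$ this reads $2[P]-2[W]$, and the argument only simplifies). Applying Weil reciprocity over $L$ to $f$ with the maps $g_{1L},g_{2L}$ gives
\[
\{g_1(P),g_2(P)\}_{L/L}+\{g_1(\overline{P}),g_2(\overline{P})\}_{L/L}-2\{\tau_1,\tau_2\}_{L/L}=0
\]
in $K(L;J_{1L},J_{2L})$. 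The last symbol is torsion because $\tau_1$ is; substituting the involution relation into the middle symbol and expanding by bilinearity (using $\{-a,-b\}=\{a,b\}$ together with the fact that any symbol with a torsion entry is torsion) shows $\{g_1(\overline{P}),g_2(\overline{P})\}_{L/L}\equiv\{g_1(P),g_2(P)\}_{L/L}$ modulo torsion. Hence $2\{g_1(P),g_2(P)\}_{L/L}$ is torsion.

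Finally I would descend to $k$. Because $\phi_i(P)\in C_i(k)$, the classes $g_i(P)=[\phi_i(P)]-[O_i]$ lie in $J_i(k)$, so the restriction formula \eqref{ressymbol} identifies $\{g_1(P),g_2(P)\}_{L/L}=\res_{L/k}\bigl(\{g_1(P),g_2(P)\}_{k/k}\bigr)$. Applying $N_{L/k}$ and using $N_{L/k}\circ\res_{L/k}=[L:k]$, the torsion element $2\{g_1(P),g_2(P)\}_{L/L}$ is sent to $2[L:k]\,\{g_1(P),g_2(P)\}_{k/k}$, which is therefore torsion; since $2[L:k]\neq 0$, the symbol $\{[\phi_1(P)]-[O_1],[\phi_2(P)]-[O_2]\}_{k/k}$ is torsion, as claimed. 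I expect the main obstacle to be exactly this last descent: the hypothesis only places $P$ in $H(\overline{k})$, so one cannot directly exhibit a $k$-rational function, and the care lies in setting up $f$ and the reciprocity relation over a single field $L$ containing both $P$ and $W$, then matching the resulting $L$-symbol with the restriction of the $k$-symbol so that the norm recovers a nonzero multiple of precisely the symbol we want.
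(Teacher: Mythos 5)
Your proposal is correct and follows essentially the same route as the paper: apply Weil reciprocity to the principal divisor $[P]+[\overline{P}]-2[W]$ over a field $L$ containing $P$ and $W$, use the involution relation to combine the two symbols at $P$ and $\overline{P}$, absorb the torsion contribution from $W$, and descend via the projection formula and norm. The only (immaterial) difference is bookkeeping: the paper takes $g_i(R)=[\phi_i(R)]-[\phi_i(W)]$ so that the symbol at $W$ vanishes identically and corrects by the torsion class $[\phi_i(W)]-[O_i]$ at the end, whereas you base the Abel--Jacobi map at $O_i$ and carry the torsion term $\tau_i$ through the computation.
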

		\begin{proof}
			Let $L/k$ be a finite extension that contains  subfields isomorphic to the fields of definition of $W$ and of $P$, so that we can identify $P$, $\overline{P}$, and $W$ with elements of $H(L)$. Then $[P]+[\overline{P}]-2[W]$ is a principal divisor on $H_L$.

			Define $g_i:H_L\to (J_i)_L$  to act on $L$-rational points by  $R\mapsto [\phi_i(R)]-[\phi_i(W)]$. For $i=1,2$, set $W_i=\phi_i(W)$ and $P_i:=\phi_i(P)$. Note that the relation $[\overline{P}]-[W]=-[P]+[W]$ on $H$ induces the relation 
			\[[\phi_i(\overline{P})]-[\phi_i(W)]=-[\phi_i(P)]+[\phi_i(W)]=-[P_i]+[W_i]\]
			in $J_i(L)$ by proper pushforward of Chow groups. The principal divisor $[P]+[\overline{P}]-2[W]$ then induces a Weil reciprocity relation \eqref{WR} on the group $K(L;(J_1)_L,(J_2)_L)$:
			\begin{align*}
				0&= \{[P_1]-[W_1],\;[P_2]-[W_2]\}_{L/L} +\{-[P_1]+[W_1],\;-[P_2]+[W_2]\}_{L/L}-2\{0,\;0\}_{L/L}\\
				&=2\{[P_1]-[W_1],\;[P_2]-[W_2]\}_{L/L}.
			\end{align*}
			
			Since $[W_i]-[O_i]\in J_{1L}(L)$  is torsion for each $i=1,2$, there exists $n>0$ such that 
			\begin{align*}
				0&=n\{[P_1]-[W_1],[P_2]-[W_2]\}_{L/L}\\
				&=n\{[P_1]-[O_1]+[O_1]-[W_1],[P_2]-[O_2]+[O_2]-[W_2]\}_{L/L}\\
				&=n\{[P_1]-[O_1],[P_2]-[O_2]\}_{L/L}.
			\end{align*}
			Applying the norm map  $N_{L/k}:K(L;(J_1)_L,(J_2)_L)\to K(k;J_1,J_2)$  (see \autoref{normres}), we can conclude that
			\[0=2n\{[P_1]_L-[O_1]_L,[P_2]_L-[O_2]_L\}_{L/k}.\]
			Since $P_i\in C_i(k)$ by assumption, the divisors $[P_i]_L-[O_i]_L\in J_i(L)$ are in the image of $\res_{L/k}:J_i(k)\to J_i(L)$. Hence, the projection formula \eqref{PF} gives
			\begin{align*}0&=n\{\res_{L/k}([P_1]-[O_1]),\res_{L/k}([P_2]-[O_2])\}_{L/k}\\
				&=n\{[P_1]-[O_1],N_{L/k}(\res_{L/k}([P_2]-[O_2]))\}_{k/k}\\
				&=n[L:k]\{[P_1]-[O_1],[P_2]-[O_2]\}_{k/k}. 
			\end{align*}	
			
			
		\end{proof}

		\begin{rem}
			Most of the examples we explore below use \autoref{torsionfromhyper}; that is, the key relation comes from Weil reciprocity for an elliptic or hyperelliptic curve. See \autoref{bielliptic} for examples that use Weil reciprocity for a curve that is neither elliptic nor hyperelliptic.
		\end{rem}
		

		\subsection{Isogenous pairs}
		
		We list some consequences of \autoref{torsionfromhyper} that can be obtained by setting $H$ equal to one of the factors of $X$.
		\subsubsection{Rank 1 hyperelliptic curves}
		
		\begin{prop}\label{hyperselfproduct}
			Let $C_1,C_2$ be hyperelliptic curves over $k$ with isogenous Jacobians $J_1$ and $J_2$. Suppose $\rk J_1(k)=1$, and there exist $P,W\in C_1(k)$ with $W$ a Weierstrass point and $[P]-[W]$ of infinite order. Then $F^2(C_1\times C_2)\new$ is finite.
		\end{prop}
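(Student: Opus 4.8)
The plan is to reduce, via the Raskind--Spiess machinery of the previous sections, to checking that a single explicit symbol is torsion, and then to feed the data $(H,P,W)$ into \autoref{torsionfromhyper} by letting $H$ play the role of its own auxiliary curve. Set $X=H\times H$, so that $J_1=J_2=J$. First I would apply the identification \eqref{F2compiso}, which gives $F^2(X)\new\simeq\Symb_k(J,J)$; note in particular that both $\Symb_k(J,J)$ and $F^2(X)\new=\CH_0(X)\new\cap F^2(X)$ are defined intrinsically, with no reference to a base point, so the finiteness we seek is independent of any choice of $O\in H(k)$.

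Next I invoke \autoref{finiteindex}. Since $\rk J(k)=1$, we have $(\rk J_1(k))(\rk J_2(k))=1$, so it suffices to produce a single linearly independent element $t\in J(k)\otimes J(k)$ with $\varepsilon_X(t)$ torsion. The natural candidate is
\[
t:=([P]-[W])\otimes([P]-[W]).
\]
Because $\rk J(k)=1$, the free part of $J(k)\otimes J(k)$ has rank $1$; writing $[P]-[W]=mg+\tau$ with $g$ a generator of the free part, $\tau$ torsion, and $m\neq 0$ (using that $[P]-[W]$ has infinite order), the image of $t$ in this free part is $m^2g\otimes g\neq 0$. Hence $t$ is a linearly independent element in the sense required by \autoref{finiteindex}.

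It then remains to show $\varepsilon_X(t)$ is torsion. Under $\widehat{\varepsilon}$ (cf.~\autoref{exmp:twocurves}, with base point $W$) this cycle corresponds to the symbol $\{[P]-[W],\,[P]-[W]\}_{k/k}\in\Symb_k(J,J)$, and I would prove this symbol is torsion by applying \autoref{torsionfromhyper} with the \emph{same} curve $H$ in the role of the auxiliary hyperelliptic curve, with $\phi_1=\phi_2=\mathrm{id}_H$, with the given point $P$, and — exploiting the base-point independence noted above — with base point $O_1=O_2=W$. Since $W$ is a Weierstrass point we have $W=\overline{W}$, and the torsion hypothesis of \autoref{torsionfromhyper} holds trivially, as $[\phi_i(W)]-[O_i]=[W]-[W]=0$ for $i=1,2$. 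The conclusion of that proposition then reads exactly that $\{[P]-[W],[P]-[W]\}_{k/k}$ is torsion, so $\varepsilon_X(t)$ is torsion and \autoref{finiteindex} yields finiteness of $F^2(X)\new$.

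There is no serious obstacle here: the statement is essentially a specialization of \autoref{torsionfromhyper}. The only point requiring care — the crux of the argument — is the observation that $H$ may serve as its own auxiliary curve and that one is free to take the base point equal to the Weierstrass point $W$, which makes the hypothesis $[\phi_i(W)]-[O_i]$ torsion hold automatically; the accompanying rank count is routine.
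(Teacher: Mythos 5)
Your proposal is correct and matches the paper's own proof, which likewise applies \autoref{torsionfromhyper} with $\phi_1=\phi_2=\mathrm{id}_H$ and $O_1=O_2=W$ and then concludes via \autoref{finiteindex}. The extra details you supply (the rank count in $J(k)\otimes J(k)$ and the base-point independence of $F^2(H\times H)\new$) are accurate elaborations of steps the paper leaves implicit.
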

		\begin{proof}
			For the case $C_2=C_1$, we can apply \autoref{torsionfromhyper} and \autoref{finiteindex} using $\phi_1=\phi_2$ the identity map on $H$, and $O_1=O_2=W$. The general case follows by \autoref{Sisogclasses}.
		\end{proof}
		
		In light of this result, \autoref{rank0}, and \autoref{corollaryS}, we can conclude that $F^2(C_1\times\cdots \times C_d)\new$ is finite if all $J_i$ with $\rk J_i(k)\neq 0$ come from a single isogeny class with rank $1$ over $k$.
		
		While \autoref{hyperselfproduct} does apply when $C_1,C_2$ are isogenous elliptic curves (take $W$ to be the identity of $C_1(k)$), it yields a more interesting result when the genus of $C_1$ is at least $2$. \black In the LMFDB~\cite{lmfdb} there are $868$ genus $2$ curves over $\Q$ that have Jacobian of rank $1$, conductor at most $10000$, and a rational Weierstrass point $W$. For all but $8$ of these, there exists another rational point $P$ such that $[P]-[W]$ has infinite order. 
		These provide the first nontrivial examples of products $X$ with $F^2(X)\new$ finite involving positive rank curves of higher genus.

		\subsubsection{Self-product of a rank 2 CM elliptic curve}
		
		Let $E$ be an elliptic curve over $\Q$ such that $E_{\overline{\Q}}$ has complex multiplication by the full ring of integers $\mathcal{O}_K$ of a quadratic imaginary field $K$. It follows that $K$ is of the form $K=\Q(\sqrt{-D})$ with $D\in\{1,2,3,7,11,19, 43, 67, 163\}$. We write $\mathcal{O}_K=\Z[\omega]$ and we denote by $[\omega]:E_K\to E_K$ the extra endomorphism. Suppose that $E(\Q)$ has rank $1$ and let $P$ be a point of infinite order. It follows by \cite[Lemma 4.5]{gazaki2021weak} that the point $[\omega](P)\in E(K)$ is $\Z$-linearly independent from $P$, and thus $E(K)$ has rank at least $2$. In all cases we have computed, $E(K)$ actually has rank exactly $2$. 
		\begin{prop}\label{CMcase} Suppose that $E$ satisfies the above assumptions and assume additionally that $\rk(E(\Q))=1$ and $\rk(E(K))=2$. Let $S$ be a set consisting of $E_K$ together with arbitrarily many curves over $K$ with Jacobian of rank $0$. Then $F^2(C_1\times\cdots\times C_d)\new$ is finite for all $C_1,\cdots, C_d\in S$. (In particular, $F^2(E_K\times E_K)\new$ is finite.)
		\end{prop}
		\begin{proof}  By \autoref{corollaryS} it suffices to show that $F^2(C_i\times C_j)\new$ is finite for all $1\leq i<j\leq d$. If either $J_i$ or $J_j$ has rank $0$ this follows by \autoref{rank0}. The only remaining case is when $C_i=C_j=E_K$. 
		\black 
			Since $P\in E(\Q)$ and $[\omega](P)\in E(K)\setminus E(\Q)$ are $\Z$-linearly independent, they generate a finite index subgroup of $E(K)$. By \autoref{finiteindex}, it is enough to show that the symbols \[\{P,P\}_{K/K},\;\{[\omega](P),P\}_{K/K},\;\{P,[\omega](P)\}_{K/K},\;\{[\omega](P),[\omega](P)\}_{K/K}\]
			are all torsion. This follows by applying \autoref{torsionfromhyper} to the four choices of $\phi_1,\phi_2\in\{[1],[\omega]\}$. 
		\end{proof}
		
		\subsection{Non-isogenous pairs: a new construction}\label{newconstruction}
		
		\black
		Let $E_1,E_2$ be elliptic curves over $k$ with fully $k$-rational $2$-torsion, so that there exist Weierstrass forms
		\[E_1\simeq E_{\alpha,\beta}:y^2=x(x-\alpha)(x-\beta),\qquad E_2\simeq E_{\gamma,\delta}:y^2=x(x-\gamma)(x-\delta)\]
		for some $\alpha,\beta,\gamma,\delta\in k^\times$ with $\alpha\neq\beta$ and $\gamma\neq \delta$. In~\cite[Theorem 1]{Scholten}, Scholten defines the curve
		\begin{align}\label{scholtencurve}
			H_{\alpha,\beta,\gamma,\delta}:(\alpha\delta-\beta\gamma)y^2=((\alpha-\beta)x^2-(\gamma-\delta))(\alpha x^2-\gamma)(\beta x^2-\delta)
		\end{align}
		over $k$, and provides explicit degree $2$ $k$-rational maps $f_1:H_{\alpha,\beta,\gamma,\delta}\to E_{\alpha,\beta}$ and $f_2:H_{\alpha,\beta,\gamma,\delta}\to E_{\gamma,\delta}$. These maps each send the Weierstrass points of $H_{\alpha,\beta,\gamma,\delta}$ to $2$-torsion points.
		Composing $f_1$ with an isomorphism $E_{\alpha,\beta}\to E_1$, and $f_2$ with an isomorphism $E_{\gamma,\delta}\to E_2$, we obtain maps $\phi_i:H_{\alpha,\beta,\gamma,\delta}\to E_i$, which map Weierstrass points to torsion points. This means that any $P\in H(k)$ will allow us to apply \autoref{torsionfromhyper}.

		\begin{algorithm}
			\caption{Computing elements of $E_1(k)\otimes E_2(k)$ that map to torsion in $F^2(E_1\times E_2)$.}
			\label{tensoralg}
			\SetAlgoLined
			\DontPrintSemicolon
			\SetKwInOut{Input}{Input}\SetKwInOut{Output}{Output}
			\Input{$\alpha,\beta,\gamma,\delta\in k^\times$ with $\alpha\neq\beta$ and $\gamma\neq \delta$.}
			\Output{A list of pairs $(P_1,P_2)\in E_1(k)\times E_2(k)$, for $E_1:y^2=x(x-\alpha)(x-\beta)$ and $E_2:y^2=x(x-\gamma)(x-\delta)$.}
			
			Find a basis $R_1,\ldots,R_r$ for a full-rank subgroup of $E_1(k)$, and a basis $S_1,\ldots,S_s$ for a full-rank subgroup of $E_2(k)$.\label{getbases}\;
			Initialize empty lists $B$ (of points in $E_1(k)\times E_2(k)$) and $T$ (of vectors in $\Q^{rs}$).\;
			Define $H:=H_{\alpha,\beta,\gamma,\delta}$ as in \eqref{scholtencurve} and the corresponding maps $\phi_i:H\to E_i$.\;
			Search for rational points $P\in H(k)$.\;
			\For{each point $P\in H(k)$ found} {
				Find $a_1,\ldots,a_r,d_1\in\Z$, $d_1\neq 0$,  such that $a_1R_1+\cdots +a_rR_r=d_1\phi_1(P)$ in $E_1(k)$.\label{phi1Pcoeffs}\;
				Find $b_1,\ldots,b_s,d_2\in\Z$, $d_2\neq 0$,  such that $b_1S_1+\cdots +b_sS_s=d_2\phi_2(P)$ in $E_2(k)$.\label{phi2Pcoeffs}\;
				Define the vector $v(P)\in \Q^{rs}$ to be \label{tensormap}
				\[\hspace{-50pt}
				 \frac{1}{d_1d_2}  (a_1,\ldots,a_r)\otimes (b_1,\ldots,b_s)=\frac{1}{d_1d_2} \black (a_1b_1,a_1b_2,\ldots,a_1b_s,a_2b_1,\ldots,a_2b_s,\ldots, a_rb_s).\]
				
				\If {$T\cup \{v(P)\}$ is linearly independent} {
					Append $\black v(P)$ to $T$.\;
					Append $(\phi_1(P),\phi_2(P))$ to $B$.\label{Lappend}
				}
			}
			Return $B$.\;
		\end{algorithm}
		
		\begin{prop}\label{algproof}
			Let $\alpha,\beta,\gamma,\delta\in k^\times$ with $\alpha\neq\beta$ and $\gamma\neq \delta$, and set $E_1:y^2=x(x-\alpha)(x-\beta)$ and $E_2:y^2=x(x-\gamma)(x-\delta)$. Let $B$ be  the list of points in $E_1(k)\times E_2(k)$  output by \autoref{tensoralg} below. Then $(P_1\otimes P_2:(P_1,P_2)\in B)$ is a linearly independent list of elements of $E_1(k)\otimes E_2(k)$, all of which map to torsion in $F^2(E_1\times E_2)$ under $\varepsilon_X$. 
		\end{prop}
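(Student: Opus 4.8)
The plan is to prove the two assertions separately: that each tensor $\phi_1(P)\otimes\phi_2(P)$ maps to a torsion class under $\varepsilon_X$, and that the resulting list is linearly independent. The torsion statement is a direct application of the machinery already in place, while the linear independence amounts to interpreting the vectors $v(P)$ produced by \autoref{tensoralg} correctly.

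For the torsion claim I would observe that Scholten's curve $H=H_{\alpha,\beta,\gamma,\delta}$ from \eqref{scholtencurve} is a genus $2$, hence hyperelliptic, curve, and that by construction the maps $\phi_i\colon H\to E_i$ send Weierstrass points to $2$-torsion points of $E_i$. Fixing any Weierstrass point $W\in H(\overline{k})$, which satisfies $W=\overline{W}$, we get that $[\phi_i(W)]-[O_i]\in J_i(\overline{k})$ is $2$-torsion for $i=1,2$. Each $P\in H(k)$ recorded in $L$ has $\phi_i(P)\in E_i(k)$, so \autoref{torsionfromhyper} applies and shows that the symbol $\{[\phi_1(P)]-[O_1],[\phi_2(P)]-[O_2]\}_{k/k}\in\Symb_k(J_1,J_2)$ is torsion. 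By \eqref{F2compiso} and the description of $\widehat{\varepsilon}$ in \autoref{exmp:twocurves}, this symbol is exactly the image of $\phi_1(P)\otimes\phi_2(P)$ under $\varepsilon_X$, so each such tensor maps to torsion in $F^2(E_1\times E_2)$.

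For linear independence I would first identify the target $\mathbb{Q}^{rs}$ of $P\mapsto v(P)$ with $(E_1(k)\otimes E_2(k))\otimes_{\mathbb{Z}}\mathbb{Q}$. Because $R_1,\dots,R_r$ and $S_1,\dots,S_s$ span full-rank subgroups, the classes $R_i\otimes S_j$ form a $\mathbb{Q}$-basis of this space, and I use them to fix coordinates. The relations $d_1\phi_1(P)=\sum_i a_iR_i$ and $d_2\phi_2(P)=\sum_j b_jS_j$, which the algorithm produces in $E_1(k)$ and $E_2(k)$, then give after tensoring with $\mathbb{Q}$
\[
(\phi_1(P)\otimes\phi_2(P))\otimes 1=\frac{1}{d_1d_2}\sum_{i,j}a_ib_j\,(R_i\otimes S_j)\otimes 1,
\]
so that $v(P)$ is precisely the coordinate vector of the image of $\phi_1(P)\otimes\phi_2(P)$ under the natural map $E_1(k)\otimes E_2(k)\to(E_1(k)\otimes E_2(k))\otimes\mathbb{Q}$.

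It then remains to read off linear independence from the algorithm. A pair is appended to $L$ only when its vector $v(P)$ is linearly independent from those already in $T$, so the recorded vectors are $\mathbb{Q}$-linearly independent in $\mathbb{Q}^{rs}$. Consequently any $\mathbb{Z}$-linear relation $\sum_\ell n_\ell(\phi_1(P_\ell)\otimes\phi_2(P_\ell))=0$ in $E_1(k)\otimes E_2(k)$ would map to $\sum_\ell n_\ell v(P_\ell)=0$ in $\mathbb{Q}^{rs}$, forcing every $n_\ell=0$. I expect the only point requiring care to be the justification that the integer relations $d_i\phi_i(P)=\cdots$ can be taken to hold exactly in $E_i(k)$ rather than merely modulo torsion; this holds because the chosen subgroups have full rank, so $E_i(k)$ modulo the subgroup generated by the $R_\bullet$ (resp.\ $S_\bullet$) is finite, and hence some nonzero multiple of $\phi_i(P)$ lies in that subgroup.
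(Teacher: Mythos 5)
Your proposal is correct and follows essentially the same route as the paper's proof: the torsion claim is obtained by feeding a Weierstrass point of Scholten's curve into \autoref{torsionfromhyper}, and linear independence is obtained by identifying $v(P)$ with the coordinate vector of $\phi_1(P)\otimes\phi_2(P)$ in $(E_1(k)\otimes E_2(k))\otimes\Q$ with respect to the basis coming from $R_1,\ldots,R_r$ and $S_1,\ldots,S_s$. Your closing remark justifying the existence of the integer relations $d_i\phi_i(P)=\cdots$ is a small but welcome addition that the paper leaves implicit.
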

		\begin{proof}
			An element $(P_1,P_2)\in B$ is of the form $(\phi_1(P),\phi_2(P))$ for some $P\in H(k)$. Recall that the maps $\phi_i$ send Weierstrass points of $H$ to $2$-torsion points of $E_1$ and $E_2$. Hence, by \autoref{torsionfromhyper}, $\varepsilon_X(P_1\otimes P_2)$ is torsion. Now $v(P)$, defined in \autoref{tensormap}, is the image of $(\phi_1(P),\phi_2(P))$ under the bilinear map 
			\[E_1(k)\times E_2(k)\to (E_1(k)\otimes E_2(k))\otimes\Q\simeq \Q^{rs}\]
			induced by sending $(R_i,S_j)\mapsto R_i\otimes S_j$ for each $1\leq i\leq r$ and $1\leq j\leq s$. We can see from \autoref{Lappend} and the preceding lines that $B$ is constructed so that its image $T$ in $(E_1(k)\otimes E_2(k))\otimes\Q$ (and therefore its image in $E_1(k)\otimes E_2(k)$) is a linearly independent list. \black
		\end{proof}
		
		Applying \autoref{finiteindex} gives the following immediate consequence.
		
		\begin{cor}\label{algfinite}
			Assume the setup of \autoref{algproof}. If the length of $B$ is equal to $(\rk E_1(k))(\rk E_2(k))$, then $F^2(E_1\times E_2)\new$ is finite.
		\end{cor}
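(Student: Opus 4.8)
The plan is to apply \autoref{finiteindex} directly, taking $J_1=E_1$ and $J_2=E_2$ (an elliptic curve is canonically its own Jacobian). That lemma requires a list of exactly $(\rk J_1(k))(\rk J_2(k))$ linearly independent elements of $J_1(k)\otimes J_2(k)$, each mapping to torsion under $\varepsilon_X$; producing such a list is precisely what \autoref{algproof} accomplishes, so the corollary should follow by combining these two inputs with the counting hypothesis on the length of $L$.

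First I would invoke \autoref{algproof}, which guarantees that the associated list of tensors $(P_1\otimes P_2 : (P_1,P_2)\in L)$ consists of linearly independent elements of $E_1(k)\otimes E_2(k)$, all of which map to torsion in $F^2(E_1\times E_2)$ under $\varepsilon_X$. The torsion property is inherited from the fact that Scholten's maps $\phi_i$ send Weierstrass points of $H$ to $2$-torsion points, so that \autoref{torsionfromhyper} applies to every $P\in H(k)$; the linear independence is guaranteed by the algorithm, which appends a point only when its image $v(P)$ is independent from the previously recorded vectors in $T$.

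Next I would feed in the length hypothesis: by assumption the length of $L$ equals $(\rk E_1(k))(\rk E_2(k))$, so the list of tensors has exactly this many entries. We therefore have a list of $(\rk J_1(k))(\rk J_2(k))$ linearly independent elements of $J_1(k)\otimes J_2(k)$, each mapping to torsion under $\varepsilon_X$. Applying \autoref{finiteindex} then yields that $F^2(E_1\times E_2)\new$ is finite, completing the argument.

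Since all of the substantive work---the construction of the hyperelliptic curve $H$ with its maps to $E_1$ and $E_2$, the verification that the relevant symbols are torsion, and the linear independence of the resulting tensors---has already been carried out in \autoref{torsionfromhyper} and \autoref{algproof}, there is no genuine obstacle remaining: the corollary is a bookkeeping combination of these results with the counting assumption. The only point worth confirming is that the number of linearly independent torsion-producing tensors matches the full rank $(\rk E_1(k))(\rk E_2(k))$ of $E_1(k)\otimes E_2(k)$, which is exactly the stated hypothesis on the length of $L$.
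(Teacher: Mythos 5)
Your proposal is correct and matches the paper's argument exactly: the paper states this corollary as an immediate consequence of \autoref{finiteindex} applied to the list produced by \autoref{algproof}, which is precisely the combination you describe.
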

		
		\subsubsection{Computations}
		
		The authors used Magma to implement a version of \autoref{tensoralg}~\cite{Lovecode},  with a few modifications as  described in \autoref{codechanges}. 
		
		By running this algorithm and applying \autoref{algfinite}, the authors were able to find a substantial collection of pairs of curves $E_1\times E_2$ such that $F^2(E_1\times E_2)\new$ is finite, including the first known examples of pairs of non-isomorphic curves with rank greater than $1$. Specifically, they used the LMFDB~\cite{lmfdb} to obtain a list of the first $m_1$ elliptic curves over $\Q$ of rank $r_1$ and torsion subgroup $C_2\times C_2$ (ordered by conductor), and the first $m_2$ elliptic curves over $\Q$ of rank $r_2$ and torsion subgroup $C_2\times C_2$, for various choices of $m_1,r_1,m_2,r_2$. The torsion subgroup $C_2\times C_2$ was chosen as it is the simplest torsion structure for which the curves have fully rational $2$-torsion, though the algorithm would work equally well on any torsion structure of the form $C_2\times C_{2n}$. \black For each $E_1$ from the first list and $E_2$ from the second, they ran \autoref{tensoralg}; for each pair with an output of length $r_1r_2$, the pair $(E_1,E_2)$ was added to a list of pairs for which $F^2(E_1\times E_2)\new$ is known to be finite.
		
		The findings are summarized in \autoref{data}, with $m_1,r_1,m_2,r_2$ appearing as ``\# of $E_1$,'' ``$\rk E_1$,'' ``\# of $E_2$,'' and ``$\rk E_2$,'' respectively. For rows with $\rk E_1(\Q)=\rk E_2(\Q)$, only one pair out of $(E_1,E_2)$ or $(E_2,E_1)$ was tested, and diagonal pairs $(E_1,E_1)$ were not included. For rows containing ``$=E_1$'' in the ``\# of $E_2$'' column, only pairs $(E_1,E_1)$ were tested.
		
		\begingroup
		\renewcommand*{\arraystretch}{1.1}
		\begin{table}
			\begin{tabular}{| c | c | c | c | c | c |} 
				\hline
				\# of $E_1$ & $\rk E_1(\Q)$ & \# of $E_2$ & $\rk E_2(\Q)$ & total \# of pairs & \# $F^2(X)\new$ finite \\
				\hline
				100 & 1 & 100 & 1& 4950 & 2602 \\
				100 & 2 & 100 & 2& 4950 & 995 \\
				100 & 2 & $=E_1$ & & 100 & 70 \\
				20 & 3 & 20 & 3& 190 & 17 \\
				20 & 3 & $=E_1$ &   & 20& 8\\
				100 & 1 & 100 & 2& 10000 & 3311 \\
				500 & 1 & 20 & 3 & 10000 & 955\\
				500 & 2 & 20 & 3 & 10000 & 615\\
				\hline
			\end{tabular}
			\caption{Counting pairs of elliptic curves over $\Q$ with $F^2(E_1\times E_2)\new$ finite}\label{data}
		\end{table}
		\endgroup
		
		\subsubsection{Improvements on \autoref{tensoralg}}\label{codechanges}
		
		The authors' code makes some changes to \autoref{tensoralg}, including the following.
		\begin{itemize}	
			\item The curve $H_{\alpha,\beta,\gamma,\delta}$ depends on the choice of $\alpha,\beta,\gamma,\delta$, and is not invariant under isomorphism. In fact, while the curves 
			\begin{align}
				E_{\alpha,\beta}\simeq E_{\beta,\alpha}\simeq E_{-\alpha,\beta-\alpha}\simeq E_{\beta-\alpha,-\alpha}\simeq E_{-\beta,\alpha-\beta}\simeq E_{\alpha-\beta,-\beta}
			\end{align}
			are isomorphic, these typically produce six\footnote{There may be fewer, for example because of accidental isomorphisms, or because $\alpha\delta-\beta\gamma=0$ so that \eqref{scholtencurve} defines a degenerate curve.
				There are of course many other tuples $(\alpha',\beta',\gamma',\delta')$ with isomorphisms $E_{\alpha',\beta'}\simeq E_{\alpha,\beta}$ and $E_{\gamma',\delta'}\simeq E_{\gamma,\delta}$, but one can check in each case that the resulting curve $H_{\alpha',\beta',\gamma',\delta'}$ is isomorphic to one of the six curves from \eqref{sixhypers}, and that the corresponding maps are compatible in an appropriate sense.} non-isomorphic hyperelliptic curves
			\begin{align}\label{sixhypers}
				H_{\alpha,\beta,\gamma,\delta},\; H_{\beta,\alpha,\gamma,\delta},\; H_{-\alpha,\beta-\alpha,\gamma,\delta},\; H_{\beta-\alpha,-\alpha,\gamma,\delta},\; H_{-\beta,\alpha-\beta,\gamma,\delta},\; H_{\alpha-\beta,-\beta,\gamma,\delta}.
			\end{align}
			Each of these six curves $H$ comes with maps $\phi_1:H\to E_1$ and $\phi_2:H\to E_2$. Therefore, we can extend \autoref{tensoralg} to search for points $P$ on all six of these curves. 
			
			\item As written, \autoref{getbases} requires a full set of Mordell-Weil generators for $E_1(k)$ and $E_2(k)$. This is not a problem if generators are given (for instance if the curves are obtained from the LMFDB), but if they are not already known then computing these sets can potentially be a significant bottleneck for the runtime. \black Fortunately, a full set of generators is not necessary. Instead, one can initialize empty lists $B_i$ of elements of $E_i(k)$ for $i=1,2$. For each $P\in H(k)$, one checks whether \autoref{phi1Pcoeffs} is possible using the current points in $B_1$. If not, first append $\phi_1(P)$ to $B_1$, and then proceed. One can do the same for $B_2$ and \autoref{phi2Pcoeffs}.
			
			With this modification, the values of $r=\#B_1$ and $s=\#B_2$ are regularly updated, and $T$ must be recomputed from $B$ whenever this occurs. It is therefore helpful to define new lists to store the coefficients of $\phi_1(P)$ (resp.\ $\phi_2(P)$) generated in \autoref{phi1Pcoeffs} (resp.\ \autoref{phi2Pcoeffs}) for each $P$ considered, in order to more quickly recompute $v(P)\in \Q^{rs}$ for the new values of $r$ and $s$. Note that the ranks of $E_1(k)$ and $E_2(k)$ must be known in advance in order to know when we reach $r=\rk E_1(k)$ and $s=\rk E_2(k)$.
			
			\item Rather than returning a list of points $(\phi_1(P),\phi_2(P))\in E_1(k)\times E_2(k)$, the code returns a list of tuples $(P,\phi_1,\phi_2)$, so that Weil reciprocity \eqref{WR} can be verified.
		\end{itemize}

		\subsection{Non-isogenous pairs: previous constructions}\label{previousconstructions}
		We are aware of two previous constructions of pairs $E_1,E_2$ of non-isogenous rank $1$ curves over $k$ with $F^2(E_1\times E_2)\new$ finite. 
		Both constructions use the fact that it suffices to find non-torsion points $P_1\in E_1(k)$ and $P_2\in E_2(k)$ with $\{P_1,P_2\}_{k/k}$ torsion (\autoref{finiteindex}). \black

		\subsubsection{Modular parametrizations}\label{prasannasrinivas}
		
		Prasanna and Srinivas considered two pairs of non-isogenous  rank $1$  curves over $\Q$: one pair of curves of conductor $91$,
		\begin{align*}
			E_1^{(91)}&:y^2+y=x^3+x,& E_2^{(91)}&:y^2+y=x^3+x^2-7x+5,\\
			\intertext{and one pair of curves of conductor $37$,}
			E_1^{(37)}&:y^2+y=x^3-x,& E_2^{(37)}&:y^2+y=x^3+x^2-23x-50.
		\end{align*}
		\black
		They prove the following proposition. 
		\begin{prop}[\cite{Prasanna/Srinivas2022}, in preparation\black ]
			
			Let $N=37$ or $91$. Let $X=E_1^{(N)}\times E_2^{(N)}$ as defined above. Then $F^2(X)\new$ is finite. 
			
		\end{prop}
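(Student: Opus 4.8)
The plan is to deduce the torsion relation from the modularity of the two curves. Since $E_1^{(N)}$ and $E_2^{(N)}$ have rank $1$ over $\Q$, \autoref{finiteindex} reduces the statement to exhibiting a \emph{single} pair of non-torsion points $P_1\in E_1^{(N)}(\Q)$ and $P_2\in E_2^{(N)}(\Q)$ for which the symbol $\{P_1,P_2\}_{\Q/\Q}$ is torsion (and if either curve happened to have rank $0$ we would already be done by \autoref{rank0}). Both curves are modular, so for a common level $M$ (a common multiple of their conductors) they are quotients of the Jacobian $J_0(M)$, and hence carry modular parametrizations $\phi_i\colon X_0(M)\to E_i^{(N)}$ for $i=1,2$. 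Taking $P_i=\phi_i(P)$ for a well-chosen point $P$, the problem becomes that of producing a Weil reciprocity relation \eqref{WR} on $X_0(M)$ that forces $\{\phi_1(P),\phi_2(P)\}_{\Q/\Q}$ to be torsion.

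The mechanism is exactly that of \autoref{torsionfromhyper}, with $X_0(M)$ (or a curve dominating it) playing the role of $H$. The essential modular input is that the cusps map under each $\phi_i$ to torsion points of $E_i^{(N)}$: by the theorem of Manin and Drinfeld, every degree-zero divisor supported on the cusps is torsion in $J_0(M)$, hence in each quotient, so the cusps supply the torsion anchor ``$W$'' required by \autoref{torsionfromhyper}. What one then needs is an involution $\iota$ of the relevant curve inducing negation on each $E_i^{(N)}$ (up to a torsion translation), in the same way the hyperelliptic involution induces negation on the target in the Scholten construction, so that the relation $[\phi_i(\iota P)]-[\phi_i(W)]=-([\phi_i(P)]-[\phi_i(W)])$ holds. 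When $N=37$ this is available directly, since $X_0(37)$ is hyperelliptic of genus $2$: choosing $P\in X_0(37)(\Q)$ whose images are of infinite order, using the hyperelliptic involution together with a cusp, and applying \autoref{torsionfromhyper} and \autoref{finiteindex} concludes.

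The case $N=91$ is the main obstacle, and indeed the substantive one, since $91$ is the smallest conductor supporting two non-isogenous rank $1$ curves. Here $X_0(91)$ has genus $7$ and is \emph{not} hyperelliptic, so \autoref{torsionfromhyper} does not apply to it verbatim; one must instead produce the reciprocity relation from a suitable lower-genus Atkin--Lehner quotient through which both $\phi_i$ factor, or more generally from a correspondence between the $E_i$ and a curve admitting an involution with the required negation property --- precisely the type of argument alluded to in \autoref{bielliptic}, obtained from a curve that is neither elliptic nor hyperelliptic. The two delicate points are: (i) locating a rational (or low-degree) point $P$ whose images $\phi_i(P)$ are both of infinite order; and (ii) verifying that the relevant function's divisor is supported on cusps and on $\iota$-paired points, so that \eqref{WR} collapses, after applying Manin--Drinfeld, to a torsion multiple of $\{\phi_1(P),\phi_2(P)\}$. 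Establishing (ii) --- i.e.\ constructing the right auxiliary function on this higher-genus modular curve and controlling the images of the ramification --- is the technical heart of the argument; granting it, bilinearity of the symbol together with the norm and projection-formula manipulations of \autoref{torsionfromhyper} force $\{\phi_1(P),\phi_2(P)\}_{\Q/\Q}$ to be torsion, and \autoref{finiteindex} completes the proof.
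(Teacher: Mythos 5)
First, a caveat on the comparison: the paper does not prove this proposition itself --- it is imported from \cite{Prasanna/Srinivas2022} --- and the surrounding discussion in the paper only sketches the Prasanna--Srinivas argument. Your high-level plan agrees with that sketch: reduce via \autoref{finiteindex} to showing that a single symbol $\{P_1,P_2\}_{\Q/\Q}$ of non-torsion points is torsion, take $P_i=\phi_i(P)$ for modular parametrizations factoring through a common genus-$2$ hyperelliptic curve ($X_0(37)$ itself for $N=37$, and the Atkin--Lehner quotient $X_0(91)/\langle w_{91}\rangle$ for $N=91$, which \emph{is} hyperelliptic of genus $2$ --- so the detour through a non-hyperelliptic ``bielliptic-type'' correspondence that you contemplate for $N=91$ is not needed), use Heegner points to produce a $P\in H(\overline{\Q})$ whose images are rational of infinite order, and kill the symbol by Weil reciprocity \eqref{WR} on $H$.

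The genuine gap is in the mechanism you propose for the reciprocity relation. You want to run \autoref{torsionfromhyper} verbatim with a cusp serving as the involution-fixed torsion anchor $W$. Manin--Drinfeld does make degree-zero cuspidal divisors torsion, but \autoref{torsionfromhyper} also requires $W=\overline{W}$, and the cusps of $X_0(37)$ are \emph{not} fixed by its hyperelliptic involution: that involution is the exceptional (non--Atkin--Lehner) one, and it carries the cusp $\infty$ to a non-cuspidal point. Conversely, the genuine Weierstrass points of $H$ have no reason to map to torsion on the $E_i^{(N)}$; when the rank is $1$ such an image may well be a non-torsion rational point. So a point $W$ satisfying \emph{both} hypotheses of \autoref{torsionfromhyper} need not exist, and your argument stalls precisely at the step you label the technical heart and then ``grant.'' The paper records how Prasanna and Srinivas circumvent this: rather than applying Weil reciprocity to $[P]+[\overline{P}]-2[W]$, they locate a second point $Q\in H(\overline{\Q})$ with $f_i(Q)\in E_i^{(N)}(\Q)$ (rational, not necessarily torsion) and apply Weil reciprocity to the principal divisor $[P]+[\overline{P}]-[Q]-[\overline{Q}]$, so that the anchor's contribution is a symbol of rational points that can be analyzed directly instead of being forced to vanish by a torsion hypothesis. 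Your proposal, as written, substitutes a cleaner mechanism that is not actually available on $X_0(37)$.
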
 
		
		Note that for $N=37$ or $91$, every elliptic curve of conductor $N$ is isogenous to either $E_1^{(N)}$ or $E_2^{(N)}$ (see for example~\cite{lmfdb}). So together with \autoref{Sisogclasses} we can conclude that $F^2(E_1\times E_2)\new$ is finite for any two curves $E_1,E_2$ of conductor $N$.

		We make a few remarks on the approach. For each of $N=37$ and $N=91$, they show that the two modular parametrizations $X_0(N)\to E_i^{(N)}$ factor through a common hyperelliptic curve $H$. For $N=37$, $X_0(37)$ is itself a genus $2$ hyperelliptic curve. On the other hand, $X_0(91)$ is not hyperelliptic, but it has an Atkin-Lehner involution $w_{91}$, and the modular parametrizations of $E_1^{(91)}$ and $E_2^{(91)}$ both factor through the genus $2$ hyperelliptic curve $X_0(91)/\langle w_{91}\rangle$. Since the two elliptic curves have rank $1$, the theory of Heegner points can be used to compute a point $P\in H(\overline{\Q})$ such that the image of $P$ under each $f_i:H\to E_i^{(N)}$ is a rational point of infinite order. 
		
		As in \autoref{torsionfromhyper}, one can obtain the desired relation in $F^2(E_1^{(N)}\times E_2^{(N)})$ by applying Weil reciprocity to a principal divisor on $H$. Rather than using a Weierstrass point as in \autoref{torsionfromhyper}, however, Prasanna and Srinivas find another point $Q\in H(\overline{\Q})$ such that $f_i(Q)\in E_i^{(N)}(\Q)$, and apply Weil reciprocity to the divisor $[P]+[\overline{P}]-[Q]-[\overline{Q}]$.
		\black
		
		\subsubsection{Rational curves in a Kummer surface}
		
		Given a number field $k$, define a $2$-parameter family of curves over $k$ by
		\begin{equation}\label{Est}
			E_{s,t}:y^2=x^3-3t^2x+2t^3+(1-s-3t)^2s.
		\end{equation}
		
		Note that the curve $E_{s,t}$ has a $k$-point $P_{s,t}:=(1-s-2t,1-s-3t)$. For each $s\in k$, let $D_s$ denote the locus of points $t\in k$ at which the discriminant of $E_{s,t}$ vanishes, or the point $P_{s,t}$ is torsion in $E_{s,t}(k)$. It can be shown (as in the proof of \cite[Proposition 5.3]{Love2020}) that $D_s$ is finite for all $s\in k\setminus\{0\}$. 
		In a previous paper, the second author proved the following.
		\begin{prop}[{\cite[Corollary 1.4]{Love2020}}]\label{love}
			Let $s\in k$. For all $t_1,t_2\in k\setminus D_s$ such that $E_{s,t_1}(k)$ and $E_{s,t_2}(k)$ have rank $1$, $F^2(E_{s,t_1}\times E_{s,t_2})\new$ is finite. \black 
		\end{prop}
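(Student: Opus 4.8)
The plan is to reduce the claim to a single relation and then produce that relation geometrically using the machinery already developed. By \autoref{finiteindex}, since $E_{s,t_1}(k)$ and $E_{s,t_2}(k)$ each have rank $1$, it suffices to exhibit one non-torsion point in each factor whose associated symbol is torsion. I would take the marked points $P_{s,t_1}$ and $P_{s,t_2}$: the hypothesis $t_i\notin D_s$ guarantees both that each $E_{s,t_i}$ is smooth and that $P_{s,t_i}$ has infinite order, so $P_{s,t_1}\otimes P_{s,t_2}$ spans a finite-index subgroup of $E_{s,t_1}(k)\otimes E_{s,t_2}(k)$. Thus the proposition follows once we show that $\{[P_{s,t_1}]-[O_1],\,[P_{s,t_2}]-[O_2]\}_{k/k}\in\Symb_k(E_{s,t_1},E_{s,t_2})$ is torsion.

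To produce this relation I would apply \autoref{torsionfromhyper}, for which I need an elliptic or hyperelliptic curve $H$ over $k$, a point $P\in H(k)$, and morphisms $\phi_i\colon H\to E_{s,t_i}$ with $\phi_i(P)=P_{s,t_i}$, together with a point $W\in H(\overline{k})$ fixed by the hyperelliptic involution for which each $[\phi_i(W)]-[O_i]$ is torsion. Such an $H$ is exactly what a rational curve on the Kummer surface provides. Writing $K=(E_{s,t_1}\times E_{s,t_2})/\langle[-1]\times[-1]\rangle$ for the Kummer surface and $q\colon E_{s,t_1}\times E_{s,t_2}\to K$ for the quotient, suppose $R\subset K$ is a rational curve through $q(P_{s,t_1},P_{s,t_2})$ whose preimage $H:=q^{-1}(R)$ is irreducible of positive genus. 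Then $H$ carries the involution induced by $[-1]\times[-1]$, whose quotient is $R\cong\mathbb{P}^1$; hence $H$ is hyperelliptic (or elliptic) with hyperelliptic involution the restriction of negation, and the two projections restrict to maps $\phi_i\colon H\to E_{s,t_i}$. As a positive-genus double cover of $\mathbb{P}^1$, $H$ necessarily ramifies, and its Weierstrass points are precisely the points of $H$ lying over the nodes of $K$, i.e.\ the $2$-torsion points of the abelian surface that lie on $H$. For any such Weierstrass point $W$, the value $\phi_i(W)$ is a $2$-torsion point of $E_{s,t_i}$, so $[\phi_i(W)]-[O_i]$ is torsion; moreover $P:=(P_{s,t_1},P_{s,t_2})\in H(k)$ satisfies $\phi_i(P)=P_{s,t_i}$. \autoref{torsionfromhyper} then yields that the symbol above is torsion, and \autoref{finiteindex} completes the argument.

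The main obstacle is the existence and explicit construction of the rational curve $R\subset K$ through the prescribed point, and this is where the careful design of the family \eqref{Est} is essential. Using the factorization $x^3-3t^2x+2t^3=(x-t)^2(x+2t)$, one checks that $P_{s,t}=(1-s-2t,\,1-s-3t)$ lies on $E_{s,t}$ for every $t$ and varies linearly in $t$; this uniform linear section is engineered so that, after passing to the double cover $K\to\mathbb{P}^1\times\mathbb{P}^1$ given by the two $x$-coordinates, the relevant locus becomes a rational curve meeting the branch divisor in the nodes that correspond to $2$-torsion. Verifying that this curve is rational, irreducible of positive genus upstairs, defined over $k$, and passes through $q(P_{s,t_1},P_{s,t_2})$ is a finite but delicate computation with the Weierstrass equations. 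Once it is in place, finiteness of $F^2(E_{s,t_1}\times E_{s,t_2})\new$ follows at once.
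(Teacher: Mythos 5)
Your proposal is correct and follows essentially the same route the paper sketches for this result: reduce via \autoref{finiteindex} to showing the single symbol $\{P_{s,t_1},P_{s,t_2}\}_{k/k}$ is torsion, and obtain that relation from \autoref{torsionfromhyper} applied to the (genus $5$) hyperelliptic curve $H$ obtained by pulling back a rational curve on the Kummer quotient of $E_{s,t_1}\times E_{s,t_2}$, whose Weierstrass points map to $2$-torsion. As in the paper, the one ingredient you do not supply --- the existence of a rational curve on the Kummer surface passing through $\pi(P_{s,t_1},P_{s,t_2})$ (and through a node, so that a suitable $W$ exists) --- is exactly what is deferred to \cite{Love2020}, so your argument is complete to the same extent as the paper's own discussion.
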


		Together with \autoref{corollaryS}, \autoref{rank0}, and \autoref{Sisogclasses}, this result can be used to create large sets of elliptic curves for which any product $X$ of curves from the set has finite $F^2(X)\new$.
		
		\begin{cor}
			Fix $s\in k$, and set 
			\[S_s=\{E_{s,t}:t\in k\setminus D_s,\,\rk E_{s,t}(k)=1\}.\]
			Suppose $C_1,\ldots,C_d$ are elliptic curves over $k$ such that for each $i$, either $\rk C_i(k)=0$ or $C_i$ is isogenous to a curve in $S_s$. Then $F^2(C_1\times\cdots C_d)\new$ is finite.
		\end{cor}

		As an example, setting $s=1$ and searching through elliptic curves of height at most $60^6$, the second author computed $27062$ pairwise non-isomorphic rank $1$ elliptic curves in $S_1$. More generally, for any fixed value of $s\in (k\setminus k^2)\cup\{1\}$, the one-parameter family $E_{s,t}$ (considered as an elliptic curve over $k(t)$) has generic rank $1$, with $P_{s,t}$ a point of infinite order~\cite[Proposition 5.3]{Love2020}. It is believed that any such family should have infinitely many specializations of rank $1$. If this is true, then for any $s\in (k\setminus k^2)\cup\{1\}$, the set $S_s$ will contain an infinite set of rank $1$ curves, pairwise non-isogenous over $\Q$, all of whose pairwise products have finite $F^2(X)\new$.
		
		
		We mention a few words about the construction. Let $E_1$ and $E_2$ be rank $1$ elliptic curves over $k$, and consider the degree $2$ map $\pi:E_1\times E_2\to Y$ obtained by taking the quotient by negation. The family $E_{s,t}$ is defined in order to guarantee existence of a rational curve in $Y$ passing through both $\pi(P_{s,t_1},P_{s,t_2})$ and $\pi(O_1,O_2)$.
		
		The paper~\cite{Love2020} then proceeds to take a principal divisor on this rational curve and pull it back along $\pi$ to produce the desired relation in $F^2(E_1\times E_2)$. We will provide another explanation for this relation. Note that any rational curve $\mathbb{P}^1\to Y$ defines a hyperelliptic curve\footnote{The rational curves defined in~\cite{Love2020} correspond to hyperelliptic curves of genus $5$.} $H$ mapping to $E_1\times E_2$ by taking the fiber product:
		\[\begin{tikzcd}
			H\arrow{r}{f}\arrow{d} &E_1\times E_2\arrow{d}{\pi}\\
			\mathbb{P}^1\arrow{r} &Y,
		\end{tikzcd}\]
		with $f(\overline{P})=-f(P)$ for all $P\in H(\overline{k})$. By composing with projection maps, we find maps $\phi_1:H\to E_1$ and $\phi_2:H\to E_2$ satisfying $\phi_i(\overline{P})=-\phi_i(P)$; in particular, Weierstrass points of $H$ must map to $2$-torsion points on each $E_i$. If $\pi(P_1,P_2)$ is in the image of the rational curve $\mathbb{P}^1\to Y$, then $(P_1,P_2)$ is in the image of $f$, so \autoref{torsionfromhyper} can be used to verify that $\{P_1,P_2\}_{k/k}$ is torsion.
		
		\begin{rem}\label{bielliptic}
			All examples discussed so far can be accounted for using Weil reciprocity on an elliptic or hyperelliptic curve, but there are examples using other curves.
			In his thesis~\cite{Lovethesis}, the second author provides some rank $1$ pairs $E_1,E_2$ with $F^2(E_1\times E_2)\new$ finite, not accounted for using any of the above techniques, for which the desired relation in $F^2(E_1\times E_2)$ comes from Weil reciprocity on a bielliptic curve. Using the Cremona labelling~\cite{cremona}, these examples include (37a1, 43a1), (37a1, 57a1), (37a1, 77a1), (53a1, 58a1), (61a1, 65a1), (61a1, 65a2), (65a2, 79a1) (\cite[Section 3.3.1]{Lovethesis}), and (37a1, 53a1) (\cite[Proposition 3.4.1]{Lovethesis}).
		\end{rem}
		
		\vspace{2pt}

		\vspace{30pt}
		
		\bibliographystyle{amsalpha}
		
		\bibliography{bibfile}
		
	\end{document}